\date{\today}
\theoremstyle{plain}
\newtheorem{thm}{Theorem}
\theoremstyle{definition}
\newtheorem{assu}{Assumption}
\theoremstyle{theorem}
    \newtheorem{theorem}{Theorem}
    \newtheorem{lemma}[theorem]{Lemma}
    \newtheorem*{conjecture}{Conjecture}
\theoremstyle{definition} % For roman text in the body
    \newtheorem{fact}{Fact}
    \newtheorem*{remark}{Remark}
    \newtheorem{example}[theorem]{Example}
    \newtheorem{exercise}[theorem]{Exercise}
\def\eps{\epsilon}
\def\N{\mathbb{N}}
\def\<{\langle}
\def\>{\rangle}
\newcommand{\E}{\mbox{\bf E}}
\def\bar{\overline}
\newcommand\mnote[1]{} %off
\newcommand\be{\begin{equation*}}
\newcommand\ee{\end{equation*}}
\newcommand\ben{\begin{equation}}
\newcommand\een{\end{equation}}
\newcommand\bes{\begin{eqnarray*}}
\newcommand\ees{\end{eqnarray*}}
\newcommand\bex{\begin{exercise}}
\newcommand\eex{\end{exercise}}
\newcommand\beg{\begin{example}}
\newcommand\eeg{\end{example}}
\newcommand\benu{\begin{enumerate}}
\newcommand\eenu{\end{enumerate}}
\newcommand\beit{\begin{itemize}}
\newcommand\eeit{\end{itemize}}
\newcommand\berk{\begin{remark}}
\newcommand\eerk{\end{remark}}
\newcommand\bdefn{\begin{defintion}}
\newcommand\edefn{\end{definition}}
\newcommand\bthm{\begin{theorem}}
\newcommand\ethm{\end{theorem}}
\newcommand\bprf{\begin{proof}}
\newcommand\eprf{\end{proof}}
\newcommand\blem{\begin{lemma}}
\newcommand\elem{\end{lemma}}
\newcommand{\sm}{{\raise0.3ex\hbox{$\scriptstyle \setminus$}}}
\def\eps{\epsilon}
\def\CHI{\mathchoice%
{\raise2pt\hbox{$\chi$}}%
{\raise2pt\hbox{$\chi$}}%
{\raise1.3pt\hbox{$\scriptstyle\chi$}}%
{\raise0.8pt\hbox{$\scriptscriptstyle\chi$}}}
\def\smalloplus{\raise1pt\hbox{$\,\scriptstyle \oplus\;$}}
\title{convergence and asymptotic freeness of missing data matrices}
\author{Kartick Adhikari}
\address{Department of Mathematics, Indian Institute of Science Education and Research, Bhopal 462066}
\email{kartickmath [at] iiserb.ac.in}
\author{Dev Ahir}
\address{Department of Mathematics, Indian Institute of Science Education and Research, Bhopal 462066}
\email{ahir22 [at] iiserb.ac.in}
\date{\today}
\thanks{The research of KA was partially supported by the Inspire Faculty Fellowship: DST/INSPIRE/04/2020/000579. The research of DA was partially supported by the JRF-CSIR Fellowship, the Govt. of India.}
\keywords{Variance profile matrix, missing data matrix, Marc\v enko Pastur law, asyptotic freeness, limiting spectral distribution.}
\begin{document}

\begin{abstract}
    We consider a random matrix of the form \(D_n \odot X_n\) (known as a variance profile matrix), where \(\odot\) denotes the Hadamard product of the two matrices, \(D_n\) is a deterministic matrix, and \(X_n\) is a random matrix. We call $D_n\odot X_n$ as a missing data matrix of $X_n$ when the entries of \(D_n\) are either \(0\) or \(1\). This framework is commonly used in various applied fields, such as biology, neuroscience, and network data analysis. 
    We study the convergence and asymptotic freeness of missing data matrices of iid, elliptic, and covariance random matrices. Specifically, it is known that independent iid, elliptic, and covariance matrices converge to freely independent circular, elliptic, and Marčenko-Pastur variables, respectively. In this article, we provide the necessary and sufficient conditions on deterministic matrices \(D_n\) for which these results hold true for independent missing data matrices of these three types of random matrices.

%In a recent work, Cook et al. derived Necessary and sufficient condition on $D_n$ to 
%hold the cilcular law under mild assumptions on the entries of random matrices. 
%We study the missing data matrices, effectively modeling the case of structured or missing data. 
%We show that under suitable assumptions, 
%$\frac{1}{\sqrt{n}}X_n \odot D_n$ converges to circular variable in $\ast$-distribution.  
%We also this result when $X_n$ is an elliptic matriced.
%prove that expected empirical spectral distribution (EESD) of
 %$\frac{1}{n}(X_{p \times n} \odot D_{p \times n})\cdot (X_{p \times n} \odot D_{p \times n})^\ast$ 
 %converges to Mar\v{c}enko-Pastur law with appropriate conditions. 
 %Our 
\end{abstract}

\maketitle

\section{Introduction}

The asymptotic behaviour of random matrices was first studied by Voiculescu  in the context of free probability theory \cite{voiculescu1991limit}. Later, it was investigated extensively by researchers in probability theory, analysis, and operator algebras \cite{dykema1993certain,hiai2000asymptotic,shlyakhtenko1997limit,tao2011universality,voiculescu1998strengthened}. More specifically, it has been established that, under appropriate assumptions, Wigner matrix converges to a semicircular variable \cite{wigner1955,wigner1958}, i.i.d. matrix converges to a circular variable \cite{girko1985,bai1997}, and more recent works have shown that elliptic matrices converge to elliptic variables \cite{gotze2015minimal,nguyen2015elliptic,adhikari2019brown}.

In recent years, considerable attention has been given in the literature of {\it variance profile random matrices}, that is, the random matrices of the form
$$
D_{n}\odot X_{n}
$$
where $\odot$ is the Hadamard product of two matrices, $D_n$ is an $n\times n$ deterministic matrix and $X_n$ is an $n\times n$ random matrix. Among the notable contributions, the work of Cook et al. \cite{cook2018non,cook2022non} provides a detailed analysis of the limiting spectral distribution of non-Hermitian random matrices with a variance profile. Foundational results were also established earlier by Hachem et al. \cite{hachem2006empirical}, focusing on deterministic equivalents for Gram matrices. More recent developments include the works of Adhikari et al. \cite{adhikari2019linear}, Cheliotis and Louvaris \cite{cheliotis2025limit}, and Bigot and Male \cite{bigot2021freeness}, which explore fluctuations, operator norms, and connections to free probability. This model finds applications in diverse fields such as ecology \cite{may1972will,allesina2015stability,allesina2015predicting} and neuroscience \cite{sompolinsky1988chaos,aljadeff2015transition,rajan2006eigenvalue}.

In this article, we consider deterministic matrices $ D_n  $ with each entries is either $0$ or $1$. In this case, we refer to $D_n\odot X_n $ as a {\it missing data matrix} of $X_n$. Clearly, a few entries of $X_n$ is missing (zero) in $D_n\odot X_n$. This framework naturally arises in various applications involving incomplete or partially observed data, including areas such as biological networks, neuroscience, and link prediction in complex networks \cite{picard2009deciphering,tabouy2020variational,latouche2011overlapping,lu2011link,zhao2017link}.

A random matrix is called an {\it iid matrix} if its entries are independent and identically distributed (iid) random variables. Suppose $X_n$ is a sequence of iid random matrices. Under appropriate assumptions on entries of $X_n$, we provide the necessary and sufficient conditions on the entries of $D_n$ such that  $D_{n}\odot X_{n}$ converges to a circular variable in $\ast$-distribution. Moreover, we show that independent missing data matrices of iid matrices are asymptotic free. See Theorem \ref{thm1}.  We also derive the necessary and sufficient conditions on  the deterministic matrices such that the missing data matrices of elliptic matrices converges to an elliptic variable in $\ast$-distribution. See Theorem \ref{thm2}. A random matrix is known as an {\it elliptic matrix} if its $(i,j)$-th and $(j,i)$-th entries are correlated.

{\sloppy
An other central result in random matrices is the Mar\v{c}enko-Pastur law, which characterizes the limiting spectral distribution of matrices of the form $ \frac{1}{n} X_{p \times n} X_{n \times p}^* $. In its classical formulation, this law applies when the entries of $ X_{p \times n}$ are i.i.d. Gaussian random variables \cite{marvcenko1967distribution}. Over time, this result has been extended to more general settings, including matrices with weaker moment assumptions and broader classes of distributions \cite{bai2010spectral,wachter1978strong,yin1986limiting}.  More recently, Adhikari and Bose \cite{adhikari2019brown} established the convergence of the expected empirical spectral distribution (EESD) in the case where $X_{p \times n}$ is an elliptic rectangular random matrix. In this article, we consider $Y_{p \times n}=D_{p \times n} \odot X_{p \times n}$, where $X_{p \times n}$ is an elliptic rectangular matrix and $D_{p \times n}$ is a deterministic matrix with entries $0$ or $1$. We provide necessary and sufficient conditions  on the entries of $D_{n\times p}$ to hold the  Mar\v{c}enko-Pastur law for $ \frac{1}{n} Y_{p \times n} Y_{n \times p}^* $. See Theorem \ref{thm3}.

The rest of the article is organized as follows. In Section \ref{sec2} we introduce the basic definitions. Our main three results are stated in Section \ref{sec3}. Section \ref{sec4} contains the proofs of Theorems \ref{thm1} and \ref{thm2}. The proof of Theorem \ref{thm3} is provided in Section \ref{sec5}.

\section{Preliminaries}\label{sec2}
 
In this section, we recall some basic definitions and notations that will be important for the results presented in the following discussion.
An algebra $\mathcal{A}$ is called a \textit{$\ast$-algebra} if there exists a mapping $x \mapsto x^*$ from $\mathcal{A} \to \mathcal{A}$ such that for all $x,y \in \mathcal{A}$ and $\alpha \in \mathbb{C}$, $\ast$ satisfies $(x+y)^* = x^* + y^*$, $(\alpha x)^* = \bar{\alpha} x^*$, $(xy)^* = y^* x^*$ and $(x^*)^* = x$. A pair $(\mathcal{A},\varphi)$ is called a $\ast$-probability space if $\mathcal{A}$ is a unital $\ast$-algebra with unity $1_{\mathcal{A}}$ and $\varphi$ is a positive ($\varphi(aa^\ast)\geq0,\text{ for all } a \in \mathcal{A}$) linear functional such that $\varphi(1_{\mathcal{A}})=1$. 

It is easy to see that the  space $(\mathcal{M}_{n}(\mathcal{A}), \mathrm{E}\operatorname{tr})$ is a $\ast$-probability space, where $\mathcal A$ denotes the algebra of all random variables with all moments finite, $\mathcal{M}_{n}(\mathcal{A})$  denotes the algebra of all $n \times n$ random matrices with entries from $ \mathcal{A}$, and 
\begin{align*}
    \mathrm{E}\operatorname{tr}(X_n)&=\frac{1}{n}\sum_{i=1}^n \mathrm{E}(x_{ii})\quad \text{for all } X_{n}=((x_{ij}))_{i,j=1}^n \in \mathcal{M}_{n}(\mathcal{A}).
\end{align*}
Suppose $(\mathcal{A}_n,\varphi_n),n\geq 1$ and $(\mathcal{A},\varphi)$ are $\ast$-probability spaces. Then we say that $x_n \in \mathcal{A}_n$ converges in $\ast$-distribution to $x\in \mathcal{A}$ if 
$$\lim_{n\to\infty}\varphi_n(\Pi(x_n,x_n^{\ast}))=\varphi(\Pi(x,x^{\ast}))\quad \text{for all polynomials }\Pi.$$
In particular we say that a sequence of matrices $X_n \in \mathcal{M}_{n}(\mathcal{A})$ converges to some $x \in \mathcal{A}$ in $\ast$-distribution if for any choice of $k\geq 1$ and $\epsilon_1,\epsilon_2,\dots,\epsilon_k \in \{1,\ast\}$
 $$\lim_{n \rightarrow \infty} \varphi_{n}(X_n^{\epsilon_1}X_n^{\epsilon_2}\cdots X_n^{\epsilon_k})=\varphi(x^{\epsilon_1}x^{\epsilon_2}\cdots x^{\epsilon_k}).$$
 We write the above convergence  by $X_n \xrightarrow{\ast}x$ as $n\to \infty$. Note that if $x_n$ is self-adjoint for each $n\geq 1$, then the definition reduces to the following:
 $$
 \lim_{n\to\infty}\varphi_n(x_n^k)=\varphi (x^k)\quad\text{for all }k\geq 1.
 $$
Suppose $(\mathcal{A},\varphi)$ is a $\ast$-probability space and $\mathcal{A}_1,\mathcal{A}_2,\dots,\mathcal{A}_n$ are unital $\ast$-subalgebras. We say that $\mathcal{A}_1,\mathcal{A}_2,\dots,\mathcal{A}_n$ are free in $(\mathcal{A},\varphi)$ if for any choice of $m\geq 2$ and $a_1,a_2,\dots,a_m\in \mathcal{A}$ such that $\varphi(a_i)=0,a_i \in \mathcal{A}_{j(i)}$ with $1\leq j(i) \leq n$ for $i=1,2,\dots,m$ and $j(1) \neq j(2) \neq \cdots \neq j(m)$ we have $\varphi(a_1 a_2 \cdots a_m)=0$. We say that variables $a_1,a_2,\dots,a_n \in \mathcal{A}$ are free if the unital $\ast$-subalgebras generated by them are free. Recall, the unital $\ast$-subalgebra generated by $a$ is the smallest subalgebra of $\mathcal{A}$ containing $\{1_\mathcal{A},a,a^{\ast}\}$.   
 
{\sloppy

Let $I$ be an index set, $a_n^{(i)}\in\mathcal{A}_n$ and $a_i\in\mathcal{A}$ for each $i\in I$. Then we say that $\{a_n^{(i)}:i\in I\}$ converges jointly to $\{a_i:i\in I \}$ in $\ast$-distribution if
$$\lim_{n \to \infty}\varphi_n(\Pi(\{a_n^{(i)}:i\in I\}))=\varphi(\Pi(\{a_i:i\in I \})).$$
In particular, we say that a sequence of matrices  $X_n^{(1)},X_n^{(2)},\dots,X_n^{(m)}\in \mathcal{M}_{n}(\mathcal{A})$ converges jointly to $x_1,x_2,\dots,x_m \in \mathcal{A}$ in $\ast$-distribution if for any choice of $k\geq 1$, $\epsilon_1,\epsilon_2,\dots,\epsilon_k \in \{1,\ast\}$ and $\tau_1,\tau_2,\dots,\tau_k\in[m]$, we have 
$$\lim_{n \to \infty}\varphi_n(X_n^{(\tau_1)\epsilon_1}X_n^{(\tau_2)\epsilon_2}\cdots X_n^{(\tau_k)\epsilon_k})=\varphi(x_{\tau_1}^{\epsilon_1}x_{\tau_2}^{\epsilon_2}\cdots x_{\tau_k}^{\epsilon_k}).$$
We denote the above convergence by $\{a_n^{(i)}:i\in I\}\xrightarrow{\ast}\{a_i:i\in I \}$. Moreover, if $\{a_i:i\in I \}$ are free then we say that $\{a_n^{(i)}:i\in I\}$ are asymptotically free. 
}

A variable $s$ in a $\ast$-probability space $(\mathcal{A}, \varphi)$ is called \textit{semi-circular} if it is self-adjoint and moment sequence is given by  
$$
    \varphi(s^h) =
\begin{cases} 
C_n, & \text{if } h = 2n, \\ 
0, & \text{if } h \text{ is odd},
\end{cases}
 $$
where $C_n$ are Catalan numbers. A variable  $ c \in \mathcal{A} $ is said to be \emph{circular} variable if it is given by
$
c = \frac{ s_1 + i s_2}{\sqrt{2}},
$ where \( s_1 \) and \( s_2 \) are two free semi-circular variables. A variable \( e \in \mathcal{A} \) is said to be \emph{elliptic variable with parameter} \( \rho \) if
$$
e = \sqrt{\frac{1 + \rho}{2}}\, s_1 + i \sqrt{\frac{1 - \rho}{2}}\, s_2,
$$
where \( s_1 \) and \( s_2 \) are free semi-circular variables.  
Observe that setting \( \rho = 1 \) yields a semi-circular variable, while \( \rho = 0 \) gives a circular variable.

The Mar\v{c}enko-Pastur law is defined for a parameter \( y > 0 \).  
If \( y > 1 \), it assigns a point mass of \( 1 - \frac{1}{y} \) at zero.  
Elsewhere, it has a density given by
\[
f_{\mathrm{MP}_y}(x) := 
\begin{cases}
\frac{1}{2\pi x y} \sqrt{(b - x)(x - a)}, & \text{if } a \leq x \leq b, \\
0, & \text{otherwise},
\end{cases}
\]
where, $a = a(y) = (1 - \sqrt{y})^2, \quad b = b(y) = (1 + \sqrt{y})^2$. We denote this probability distribution by $\mathrm{MP}_y$.

Suppose $X_n$ is an $n \times n$ matrix with eigenvalues $\lambda_1,\lambda_2,\dots,\lambda_n$. Then empirical spectral distribution of $X_n$ is defined by $\frac{1}{n}\sum_{i=1}^n \delta_{\lambda_{i}}$, where $\delta_a$ denotes the Dirac delta measure at $a$. The empirical spectral distribution function (ESD)
is given by
\[
F_{X_n}(x,y):=\frac{1}{n}\sum_{i=1}^n 1_{\{\Re(\lambda_i)\leq
	x,\Im(\lambda_i)\leq y\}}.
\]
 The expected empirical spectral distribution function (EESD) of $X_n$ is defined as 
\begin{align*}
    \mathrm{E}(F_{X_n}(x,y))&:=\frac{1}{n}\sum_{i=1}^n \mathrm{P}(\Re(\lambda_i)\leq x,\Im(\lambda_i)\leq y).
\end{align*}
The limiting spectral distribution (LSD) of sequence of  matrices $X_n$ is the weak limit of the sequence $\{F_{X_n}\}$, if it exists. 

%Fuglede-Kadision determinant of an invertible variable $a$ in a $\ast$-probability space $(\mathcal{A},\varphi)$ is defined by 
%$$\Delta(a) := \exp\left[ \frac{1}{2} \varphi(\log(aa^*)) \right].$$
%If $a$ is not invertible then it is defined by $\lim_{\epsilon \downarrow 0}\exp[ \frac{1}{2} \varphi(\log(aa^*+\epsilon)) ]$. The brown measure of $a$ is defined by 
%$$\mu_{B,a}:=\frac{1}{2\pi}\left(\frac{\partial^2}{\partial x^2}+\frac{\partial^2}{\partial y^2}\right)\operatorname{log}\Delta(a-\lambda),$$
%where $x$ and $y$ are real and imaginary parts of $\lambda$. Suppose $A_n$ is a an $n \times n$ matrix with eigenvalues $\lambda_1,\lambda_2,\dots,\lambda_n$. Then 
%$$\Delta(A_n)=\sqrt[n]{|det(A_n)|}\text{ and }\mu_{B,A_n}=\frac{1}{n}\sum_{i=1}^n\delta_{\lambda_{i}}.$$
%Therefore the Brown measure is the ESD of the matrix. Even if the ESD converges, it doesn’t always match the Brown measure of the limiting variable, though in some cases, they do turn out to be the same.

Moreover, we also use  the following notations in proofs.
\begin{align*}
	\mathcal{P}_{2}(2k)&:=\text{the set of all pair partitions of }\{1,2,\dots,2k\},
	\\
	\mathit{NC}_{2}(2k)&:=\text{the set of all non-crossing pair partitions of }\{1,2,\dots,2k\},
	\\
	\gamma &:= (1,2,3,\dots ,2k-1,2k) \in S_{2k}\text{ (Symmetric group)},
	\\
	\gamma\pi(r)&:=\gamma(\pi(r)),\text{where identify }\pi\in\mathcal{P}_{2}(2k) \text{ as a permutation in }S_{2k},
	\\
	I_{2k}&:=\{(i_1,i_2,\dots,i_{2k})\in\mathbb{N}^{2k}:1\leq i_t \leq n\text{ for }t=1,2,\dots,2k\},
		\\
	[m]&:=\{1,2,\dots,m\},
\end{align*}
we write $f(n)=O(g(n))\text{ if there exists }M>0\text{ and }N \in \mathbb{N} \text{ such that }|f(n)|\leq M g(n),\text{ for all }n\geq N $ and $f(n)=o(g(n))$ if for any $\epsilon>0$ there exists $N\epsilon\in\mathbb{N}$ such that $|f(n)|\leq \epsilon g(n)$ for all $n\geq N_\epsilon$.

\section{Main results}\label{sec3}
The main results  are stated in this section. Throughout, we work under the following basic assumptions.
\begin{assu}\label{assu 1}
    For every n, the variables \{$x_{i,i,n}\text{ : }1\leq i \leq n  $\} $\bigcup$ \{ ($x_{i,j,n},x_{j,i,n}$) : $1\leq i < j \leq n$\} form a collection of independent random variables, and satisfy $\mathrm{E}(x_{i,j,n})=0,$ $\mathrm{E}(x_{i,j,n}^{2})=1$ for all $i,j=1,2,\dots,n$ and $\mathrm{E}(x_{i,j,n}x_{j,i,n})=\rho$ for $1\leq i \neq j \leq n$ . Moreover,
$$\sup_{i,j,n}\mathrm{E}(|x_{i,j,n}|^{k})\leq\mathnormal{B}_{k}<\infty \text{ for all k }\geq1.$$
\end{assu}
For ease of writing, we write $x_{i,j}$ instead of $x_{i,j,n}$. The first result deals with the asymptotic freeness of independent missing data matrices of iid matrices.

\begin{thm}\label{thm1}
    Suppose $X_{n}$ is a sequence of random matrices whose entries satisfy Assumption \ref{assu 1} with $\rho = 0$ and $D_{n}$ is a sequence deterministic matrices with entries 0 or 1, then 
    $$n^{-1/2}D_{n}\odot X_{n}\xrightarrow{\ast}c \text{ if and ony if } \frac{1}{n^{2}}\sum_{i,j=1}^{n}d_{ij} \xrightarrow{} 1,$$
    where c is a circular variable.
    
    Moreover, suppose that $X_n^{(1)},X_n^{(2)},\dots,X_n^{(m)}$ are independent random matrices whose entries satisfy Assumption \ref{assu 1} with $\rho=0$ and $D_{n}^{(1)},D_n^{(2)},\dots,D_n^{(m)}$ are deterministic matrices with entries 0 or 1. Then 
    $$
    A_{n}^{(1)},\dots,A_{n}^{(m)}\xrightarrow{\ast}c_1,\dots,c_m \text{ if and ony if } \frac{1}{n^{2}}\sum_{i,j=1}^{n}d_{ij}^{(l)} \xrightarrow{} 1 \text{ for each } l\in [m],
    $$
    where $A_n^{(l)}=n^{-1/2}D_{n}^{(l)}\odot X_{n}^{(l)}$ for $l\in [m]$ and $c_1,\dots,c_m$ are free circular variables.
\end{thm}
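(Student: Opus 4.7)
The strategy is the method of moments. For the necessity, set $k=2$ with $\epsilon_1=1$ and $\epsilon_2=\ast$ to obtain
\begin{equation*}
\varphi_n(A_n A_n^{\ast}) \;=\; \frac{1}{n^2}\sum_{i,j=1}^{n} d_{ij}^{2}\,\mathrm{E}|x_{ij}|^{2} \;=\; \frac{1}{n^2}\sum_{i,j=1}^{n} d_{ij},
\end{equation*}
since $d_{ij}^{2}=d_{ij}$ and $\mathrm{E}(x_{ij}^{2})=1$. Convergence to a circular $c$ forces this quantity to tend to $\varphi(cc^{\ast})=1$, which is exactly the stated condition. For sufficiency, I would expand a general mixed $\ast$-moment as
\begin{equation*}
\varphi_n\!\bigl(A_n^{\epsilon_1}\cdots A_n^{\epsilon_k}\bigr)
\;=\; \frac{1}{n^{1+k/2}}\sum_{i_1,\dots,i_k}\prod_{t=1}^{k}d^{\epsilon_t}_{i_t,i_{t+1}}\;\mathrm{E}\!\left[\prod_{t=1}^{k} x^{\epsilon_t}_{i_t,i_{t+1}}\right],
\end{equation*}
with $i_{k+1}:=i_1$ and the convention $d^{1}_{ij}=d_{ij},\ d^{\ast}_{ij}=d_{ji}$ (similarly for $x$, with complex conjugation attached to $\ast$).

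Using independence and Assumption \ref{assu 1}, the inner expectation factors along the partition $\sigma$ of $\{1,\dots,k\}$ determined by the coincidences of the ordered index pairs $(i_t,i_{t+1})$, and the uniform moment bound $B_k$ shows that contributions from partitions containing a block of size $\geq 3$ are $O(n^{-1/2})$. Thus only pair partitions survive, and $k=2m$ is forced to be even. The standard genus computation on $\gamma\pi$, with $\gamma=(1,2,\dots,2m)$ and $\pi\in\mathcal{P}_2(2m)$, shows that the number of free indices is at most $m+1$, with equality exactly when $\pi\in\mathit{NC}_2(2m)$, so crossing partitions contribute $O(n^{-1})$. Finally, since $\rho=0$, within any surviving pair $\{s,t\}$ one needs $\epsilon_s\neq \epsilon_t$: otherwise the pair expectation involves $\mathrm{E}(x_{ij}x_{ji})=0$ for $i\neq j$, with the diagonal contribution $i=j$ negligible by the same genus count.

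For each non-crossing, sign-alternating $\pi$, the Kronecker constraints merge the $2m$ directed edges into $m$ distinct undirected edges of a tree on $m+1$ vertices, and each pair produces a factor $d_{ij}^{2}=d_{ij}$. The surviving sum equals
\begin{equation*}
\frac{1}{n^{m+1}}\sum_{\text{tree labelings}}\ \prod_{\text{edges }(i,j)}d_{ij}.
\end{equation*}
The heart of the argument is to show this tends to $1$ under $\frac{1}{n^{2}}\sum d_{ij}\to 1$. Writing $m_i:=n-\sum_j d_{ij}$ for the row deficits, the hypothesis gives $\sum_i m_i=o(n^2)$, and since $m_i\leq n$ one gets $\sum_i m_i^{q}=o(n^{q+1})$ for every fixed $q\geq 1$. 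Peeling off leaves of the tree one at a time and summing out each leaf index yields a telescoping reduction whose error at each step is controlled by these deficit moments; after $m$ steps the main term equals $n^{m+1}(1+o(1))$. Summing the contributions over all non-crossing sign-alternating $\pi$ reproduces the circular $\ast$-moment $\varphi(c^{\epsilon_1}\cdots c^{\epsilon_{2m}})$.

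For the joint statement, the same expansion applies after attaching a marker $\tau_t\in[m]$ to each factor. Independence across the $X_n^{(\ell)}$ forces each surviving pair $\{s,t\}$ to satisfy $\tau_s=\tau_t$, and the remainder of the analysis is unchanged; the admissible partitions are exactly those appearing in the free-circular moment formula $\varphi(c_{\tau_1}^{\epsilon_1}\cdots c_{\tau_{2m}}^{\epsilon_{2m}})$, which yields asymptotic freeness. The necessity of each individual condition $\frac{1}{n^2}\sum d^{(\ell)}_{ij}\to 1$ follows by restricting to $\tau_t\equiv\ell$ and applying the same $k=2$ test as in the single-matrix case. The main obstacle I expect is the tree-sum asymptotic: maintaining uniform error control in the inductive leaf-peeling simultaneously for all non-crossing pair partitions requires careful bookkeeping, and the $\rho=0$ vanishing of same-sign pairs must be combined with the genus bound rather than argued separately.
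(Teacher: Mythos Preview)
Your overall moment-method route and the $k=2$ computation for necessity match the paper exactly. Two remarks.

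First, on the same-sign pairs: your claim that $\epsilon_r=\epsilon_s$ forces the pair expectation to be $\mathrm{E}(x_{ij}x_{ji})=\rho=0$ has the cases reversed. Under Assumption~\ref{assu 1} with $\rho=0$ (real entries, so the factorization is along \emph{unordered} index pairs, not ordered ones), one has
\[
\mathrm{E}\bigl[x^{\epsilon_r}_{i_ri_{r+1}}x^{\epsilon_s}_{i_si_{s+1}}\bigr]=\delta_{\epsilon_r\epsilon_s}\,a'(r,s)+(1-\delta_{\epsilon_r\epsilon_s})\,b'(r,s),
\]
where $a'(r,s)=\delta_{i_ri_s}\delta_{i_{r+1}i_{s+1}}$ and $b'(r,s)=\delta_{i_ri_{s+1}}\delta_{i_si_{r+1}}$. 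A same-sign pair therefore does \emph{not} vanish by $\rho=0$; it carries the constraint $a'$ with weight~$1$. The genus inequality you invoke (number of orbits of $\gamma\pi$ is at most $k+1$, with equality iff $\pi\in\mathit{NC}_2$) pertains to the $b'$ constraints. What actually eliminates the $a'$ contributions is a separate counting lemma---the paper's Fact~\ref{fact 2}, taken from \cite{adhikari2019brown}---showing that any product of pair-covariances containing at least one $a'$ factor sums to $o(n^{k+1})$. This is not ``the same genus count,'' and your proposal as written has a gap here.

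Second, your tree-sum argument via deficits $m_i=n-\sum_jd_{ij}$, the bound $\sum_i m_i^q\le n^{q-1}\sum_i m_i=o(n^{q+1})$, and leaf-peeling is a correct alternative to the paper's device. The paper instead proves (Lemma~\ref{lem 1}) that all but $\epsilon n$ rows and columns have density at least $1-\epsilon$, builds nested ``good index sets'' $A_{\epsilon,n}^{(l)},B_{\epsilon,n}^{(l)}$ (Lemma~\ref{lem 2}), and restricts each successively peeled index to such a set to obtain the lower bound $(n-k\epsilon n)^{k+1}$. Your deficit-moment scheme is more direct; the paper's good-set scheme is more structural and extends with only cosmetic changes to the multi-matrix Lemma~\ref{lem new}. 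Both deliver the required $1+o(1)$.
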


\begin{remark}
	In a recent work \cite{cook2022non}, Cook et al. derived the necessary and sufficient condition on the doubly stochastic variance profile matrices to hold the circular law under mild assumptions on the entries of random matrices. See \cite[Corollary 2.9]{cook2022non}. In this case, the entries of deterministic matrix is strictly positive. In contrast, we work with a different model, where the deterministic matrix need not be doubly stochastic and entries can be zero.
	
	Another related work,	Wood \cite{wood2012universality} showed that the ESD of $n^{-1/2}X_n$, where $X_n$ is a random matrix and each entry is an i.i.d. copy of $\frac{1}{\sqrt{\rho}}\operatorname{I}_{\rho}Z$, converges in probability to the uniform distribution on the unit disk. Here, $Z$ is a standard complex random variable, and $\operatorname{I}_{\rho}$ is an independent Bernoulli random variable with $\rho=n^{-1+\alpha}$ for $0<\alpha\leq 1$, effectively modeling a randomly sparsified matrix. 
	
	In this work, we do not discuss the convergence of the ESD, which remains an open problem in our setting. Furthermore, if the ESD is shown to converge, a natural question is whether the limit coincides with the Brown measure of the limiting variable, which in this case is the uniform distribution on the unit disc. We have the following conjecture.
	
	\begin{conjecture}
		Let $X_n$ be an $n\times n$ iid matrix with mean zero variance one entries and $D_n$ be an $n\times n$ deterministic matrix with entries in $\{0, 1\}$. Then the LSD of  $n^{-1/2}D_n\odot X_n$ is the uniform probability measure on the unit disk almost surely if and only if 
		\[
		\frac{1}{n^{2}}\sum_{i,j=1}^{n}d_{ij} \xrightarrow{} 1, \mbox{ as $n\to \infty$}.
		\]
	\end{conjecture}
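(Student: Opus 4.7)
The plan is to attempt both directions via Girko's Hermitization formula (in the form developed by Bordenave--Chafa\"i), coupled with the $\ast$-moment convergence of Theorem~\ref{thm1}. Throughout, write $A_n := n^{-1/2} D_n \odot X_n$, $\alpha_n := \frac{1}{n^2}\sum_{i,j} d_{ij}^{(n)}$, and $\mu_{A_n}$ for the ESD of $A_n$.

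For the \textbf{necessity} direction, assume $\mu_{A_n}$ converges weakly a.s.\ to the uniform measure $\mu$ on the unit disk. A standard LLN argument (using the moment bounds of Assumption~\ref{assu 1} and a truncation) yields $\frac{1}{n}\|A_n\|_F^2 = \frac{1}{n^2}\sum_{i,j} d_{ij}|x_{ij}|^2 \to \alpha := \lim_n \alpha_n$ a.s., while the ESD hypothesis forces $\frac{1}{n}\sum |\lambda_i|^2 \to \int |z|^2\, d\mu = 1/2$. Assuming uniform integrability of $\log|\cdot|$ against $\mu_{A_n}$ (which would follow from the least-singular-value estimate discussed below), $\frac{1}{n}\log|\det A_n| \to \int \log|z|\, d\mu = -1/2$; writing this as $\int \log x\, d\nu_n(x)$ where $\nu_n$ is the singular-value ESD of $A_n$, the limiting singular-value distribution $\nu$ satisfies $\int x^2\, d\nu = \alpha$, $\int \log x\, d\nu = -1/2$, and (by Weyl majorization together with the ESD hypothesis) $\int x^2\, d\nu \geq 1/2$. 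Pinning down $\alpha = 1$ requires identifying $\nu$ with the quarter-circle law on $[0,2]$ -- a step I would justify by matching $\ast$-moments of $A_n$ forced by a uniform Brown measure on the disk, combined with $d_{ij}\in\{0,1\}$ giving $\alpha \leq 1$ automatically.

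For \textbf{sufficiency}, assume $\alpha_n \to 1$. By Girko--Bordenave--Chafa\"i, it suffices to verify, for Lebesgue-a.e. $z \in \C$: (i) the ESD $\nu_{n,z}$ of singular values of $A_n - zI$ converges weakly a.s.\ to some $\nu_z$; and (ii) $\log x$ is uniformly integrable with respect to $\{\nu_{n,z}\}$ almost surely. Then $\mu_{A_n}$ converges weakly a.s.\ to $\mu := \frac{1}{2\pi}\Delta_z\int_0^\infty \log x\, d\nu_z(x)$, which is precisely the Brown measure of the circular element $c$ from Theorem~\ref{thm1}, namely the uniform measure on the unit disk. Condition~(i) follows from Theorem~\ref{thm1}: convergence of all $\ast$-moments of $A_n$ to those of $c$ yields moment convergence of $(A_n - zI)(A_n - zI)^*$ to those of $(c - z)(c - z)^*$; since these moments grow at most as $(1+|z|)^{2k}$ times Catalan-type factors, they determine $\nu_z$ uniquely, and an Azuma-type concentration on the normalized trace of the resolvent upgrades convergence in expectation to a.s.\ weak convergence.

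The essential obstacle is condition~(ii). Uniform integrability of $\log x$ near zero demands a polynomial lower bound of the form $\P(\sigma_{\min}(A_n - zI) \leq n^{-B}) \leq n^{-A}$, for some $A, B > 0$ and Lebesgue-a.e.\ $z \in \C$. For dense iid matrices this is the Tao--Vu / Rudelson--Vershynin least-singular-value theorem; for variance profiles with entries bounded away from zero, it is due to Cook et al.~\cite{cook2018non, cook2022non}. In our setting $D_n$ may have up to $o(n^2)$ zero entries in arbitrary patterns -- potentially creating near-sparse rows or columns on which the usual $\eps$-net / small-ball anti-concentration technology degrades. A natural attempt is to decompose $D_n = J_n - E_n$ with $J_n$ the all-ones matrix, apply Rudelson--Vershynin to $n^{-1/2}X_n - zI$, and treat $n^{-1/2} E_n \odot X_n$ perturbatively; however, this only succeeds under hypotheses strictly stronger than $\alpha_n \to 1$ (for instance $\frac{1}{n}\max_i \sum_j (1 - d_{ij}) \to 0$). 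I expect that resolving the conjecture in full generality will require an intrinsic least-singular-value estimate adapted to arbitrary Hadamard-deficit patterns, and that this is the step where genuinely new ideas are needed.
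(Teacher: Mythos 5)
The statement you are addressing is stated in the paper as an open \emph{conjecture}: the authors give no proof, and their accompanying remark pinpoints exactly why --- the Hermitization route of Tao--Vu and Bordenave--Chafa\"i requires the smallest singular value to be polynomially bounded below with high probability, whereas here $D_n\odot X_n$ can be singular with probability one (e.g.\ $D_n$ may have $o(n)$ identically zero rows while still satisfying $\frac{1}{n^2}\sum_{i,j}d_{ij}\to 1$). Your proposal is therefore not a proof but a program, and to your credit you identify the same obstruction the authors do: condition (ii), the uniform integrability of $\log x$ against the singular-value ESDs of $A_n-zI$, which in this setting cannot be obtained from the existing least-singular-value technology because the zero pattern of $D_n$ is arbitrary. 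Your condition (i) is fine and is indeed what Theorem \ref{thm1} delivers, but (i) alone is well known to be insufficient for convergence of the (non-Hermitian) ESD, so the proof is incomplete precisely at the step you flag.

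Two further points. First, your necessity argument also has a genuine gap: Weyl majorization only gives $\sum_i|\lambda_i|^2\le\|A_n\|_F^2$, hence $\alpha\ge 1/2$ rather than $\alpha=1$, and your proposed repair --- identifying the limiting singular-value law by ``matching $\ast$-moments forced by a uniform Brown measure'' --- does not work, because the Brown measure does not determine the $\ast$-distribution (the map from $\ast$-moments to Brown measures is highly non-injective), and your log-determinant step presupposes the same uniform integrability you have not established. Second, your perturbative decomposition $D_n=J_n-E_n$ indeed fails under the hypothesis $\alpha_n\to1$ alone, since that hypothesis controls only the total number of zeros and not their row/column concentration; this is consistent with the authors' view that a genuinely new small-ball estimate adapted to arbitrary Hadamard-deficit patterns is required. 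In short: there is no proof in the paper to compare against, and your attempt correctly locates, but does not close, the gap that makes this a conjecture.
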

\noindent It is noteworthy of mentioning that the method used in \cite{tao2011universality}, \cite{bordenave} to prove the circular law can not be used directly. Because the fact that the smallest singular values of $X_n$ is bounded away from $0$ with high probability is used crucially in the proof. Whereas, in our set up, the smallest eigenvalue of $D_n\odot X_n$ can be zero with probability one.
	
	\end{remark}

The next two results deal with the asyptotic freeness of independent missing data matrices of elliptic matrices and covariance matrices.

\begin{thm}\label{thm2}
     Suppose $E_{n}$ is a sequence of elliptic matrices whose entries satisfy Assumption \ref{assu 1} and $D_{n}$ is a sequence of symmetric deterministic matrices with entries 0 or 1, then
        $$n^{-1/2}D_{n}\odot E_{n}\xrightarrow{\ast}e \text{ if and ony if } \frac{1}{n^{2}}\sum_{i,j=1}^{n}d_{ij} \xrightarrow{} 1,$$
        where e is an elliptic variable. 
      
     Moreover, suppose that $E_n^{(1)},E_n^{(2)},\dots,E_n^{(m)}$ are independent random matrices whose entries satisfy Assumption \ref{assu 1} with $\rho_1,\rho_2,\dots,\rho_m$ respectively and $D_{n}^{(1)},\dots ,D_n^{(m)}$ are symmetric deterministic matrices with entries 0 or 1. Then  
     $$
     A_{n}^{(1)},\dots,A_{n}^{(m)}\xrightarrow{\ast}e_1,\dots,e_m \text{ if and ony if } \frac{1}{n^{2}}\sum_{i,j=1}^{n}d_{ij}^{(l)} \xrightarrow{} 1 \text{ for each } l\in[m],
     $$
     where $A_n^{(l)}=n^{-1/2}E_{n}^{(l)}\odot D_{n}^{(l)}$ for $l\in[m]$ and $e_1,e_2,\dots,e_m$ are free elliptic variables.
\end{thm}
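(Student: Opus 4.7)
The proof would proceed by the moment method. Expanding the trace, for $\epsilon_1,\ldots,\epsilon_{2k}\in\{1,\ast\}$,
$$\mathrm{E}\tr\bigl(A_n^{\epsilon_1}\cdots A_n^{\epsilon_{2k}}\bigr)=\frac{1}{n^{1+k}}\sum_{(i_1,\ldots,i_{2k})\in I_{2k}}\mathrm{E}\bigl[x^{\epsilon_1}_{i_1 i_2}\cdots x^{\epsilon_{2k}}_{i_{2k}i_1}\bigr]\prod_{r=1}^{2k}d_{i_r i_{r+1}},$$
with $i_{2k+1}:=i_1$. The symmetry $d_{ij}=d_{ji}$ makes each $d$-factor depend only on the unordered edge $\{i_r,i_{r+1}\}$, and odd-length $\ast$-moments vanish by parity. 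Necessity is immediate: taking length $2$ and $(\epsilon_1,\epsilon_2)=(1,\ast)$ yields $\mathrm{E}\tr(A_nA_n^\ast)=n^{-2}\sum_{i,j}d_{ij}$, which must converge to $\varphi(ee^\ast)=1$.

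For sufficiency, contributions from index tuples where some edge is visited with multiplicity $\geq 3$ are of lower order by the uniform moment bound in Assumption \ref{assu 1}, and a standard genus argument reduces the analysis to non-crossing pair partitions: crossing $\pi$ contribute $O(1/n)$ since the number of free vertices drops by at least one per crossing. For $\pi\in\mathit{NC}_{2}(2k)$, the block $\{r,s\}$ contributes weight $1$ when $\epsilon_r\neq\epsilon_s$ (the factors reduce to $x_{ij}$ and $\overline{x_{ij}}$, giving $\mathrm{E}|x_{ij}|^2=1$) and weight $\rho$ when $\epsilon_r=\epsilon_s$ (the factors become $x_{ij}$ and $x_{ji}$, giving $\mathrm{E}(x_{ij}x_{ji})=\rho$). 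Summed over $\mathit{NC}_{2}(2k)$, these weights reproduce exactly the $\ast$-moments of the elliptic variable $e$ with parameter $\rho$.

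The presence of the deterministic factors $\prod_r d_{i_r i_{r+1}}$ does not alter this limit. For each fixed $\pi\in\mathit{NC}_{2}(2k)$, the admissible tuples $V(\pi)$ are parametrized by $k+1$ free vertices on the tree encoded by $\pi$, and the pairing collapses the $d$-product to $\prod_{e\in E(\pi)}d_{e}$ over the $k$ distinct edges of that tree. Using $1-\prod_e d_e\leq \sum_e(1-d_e)$ valid for $d_e\in\{0,1\}$, the deficit
$$\frac{1}{n^{k+1}}\sum_{V(\pi)}\Bigl(1-\prod_{e\in E(\pi)}d_{e}\Bigr)$$
is bounded by a constant times $n^{-2}\cdot\#\{(i,j):d_{ij}=0\}$, which is $o(1)$ under the hypothesis $n^{-2}\sum d_{ij}\to 1$. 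Hence $n^{-k-1}\sum_{V(\pi)}\prod_e d_e\to 1$, and the non-crossing contribution matches the target elliptic moment.

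The joint statement follows from the same scheme applied to mixed traces $\mathrm{E}\tr(A_n^{(\tau_1)\epsilon_1}\cdots A_n^{(\tau_{2k})\epsilon_{2k}})$: independence of the families $\{E_n^{(l)}\}_{l\in[m]}$ forces every pair to be colour-monochromatic ($\tau_r=\tau_s$), so only colour-preserving non-crossing pair partitions survive, which is precisely the moment description of free elliptic variables $e_1,\ldots,e_m$ with parameters $\rho_1,\ldots,\rho_m$. The main technical point I expect to require care is the correct attribution of the $\rho$-versus-$1$ weight to each non-crossing block, together with tracking how each $\epsilon_r$ interacts with the direction of edge traversal once pairs are collapsed; the rest, including the negligibility of crossing and non-pair partitions in the presence of the bounded $d$-factors and the density deficit estimate, is then routine.
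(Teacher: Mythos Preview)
Your argument is correct and follows the same overall architecture as the paper's proof: expand the trace, discard non-pair and crossing contributions via the usual genus count, and identify the surviving non-crossing weights with the elliptic $\ast$-moments from Fact~\ref{fact 4}. The one substantive difference is how you handle the deterministic factor. The paper proves
\[
\frac{1}{n^{k+1}}\sum_{I'_{2k}(\pi)}\prod_{(r,s)\in\pi}d_{i_r i_s}\longrightarrow 1
\]
via Lemmas~\ref{lem 1} and~\ref{lem 2}: it builds nested ``$\epsilon$-good'' row and column sets $A_{\epsilon,n}^{(l)}, B_{\epsilon,n}^{(l)}$ and peels off one free index at a time, picking up a factor $\ge n-l\epsilon n$ at each step. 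Your route is more direct: you observe that the collapsed product is indexed by the $k$ edges of the tree on the $k+1$ orbits of $\gamma\pi$, apply the elementary bound $1-\prod_e d_e\le\sum_e(1-d_e)$, and note that each summand contributes exactly $n^{-2}\#\{(i,j):d_{ij}=0\}=o(1)$ after integrating out the $k-1$ remaining free vertices. This is shorter, avoids the auxiliary lemmas entirely, and carries over to the multi-matrix statement with no extra work (each edge simply inherits a colour $\tau$ and one uses the hypothesis for $D^{(\tau)}$), whereas the paper needs a separate Lemma~\ref{lem new}. Two small remarks: your parenthetical ``$x_{ij}$ and $\overline{x_{ij}}$'' should read ``$x_{ij}$ and $x_{ij}$'' in this real-entry setup (the $\ast$ is a transpose, not a conjugate), and ``vanish by parity'' for odd moments is really ``are $O(n^{-1/2})$'' --- but neither affects the substance.
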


\begin{thm}\label{thm3}
Suppose $X_{p\times n }$ is an elliptic rectangular matrix whose entries satisfy Assumption \ref{assu 1} and $D_{p \times n}$ is a deterministic matrix with entries 0 or 1. If $\frac{p}{n} \xrightarrow{}y>0$ as $p \xrightarrow{} \infty$ and $\overline{X}_p=\frac{1}{n}(D_{p \times n} \odot X_{p \times n})\cdot (D_{p \times n} \odot X_{p \times n})^\ast$ then 
$$\overline{X}_p\xrightarrow{\ast}\mathrm{MP}_y \text{ if and ony if } \frac{1}{np}\sum_{\substack{1 \leq i \leq p, \\ 1 \leq j \leq n}}d_{ij} \xrightarrow{} 1 \text{ as p } \xrightarrow{} \infty.$$
Moreover, EESD of $\overline{X}_p$ converges to $\mathrm{MP}_y$. 

In addition, if $X_{p\times n }^{(1)},\dots,X_{p\times n }^{(m)}$ are elliptic independent random matrices whose entries satisfy Assumption \ref{assu 1} and $D_{p\times n }^{(1)},\dots,D_{p\times n }^{(m)}$ are deterministic matrices with entries 0 or 1. Then $\overline{X}_p^{(1)},\dots,\overline{X}_p^{(m)}$ are  asymptotically free if and only if
 $$ \frac{1}{np}\sum_{\substack{1 \leq i \leq p, \\ 1 \leq j \leq n}}d_{ij}^{(l)} \xrightarrow{} 1 \text{ as } p  \xrightarrow{} \infty \text{ for each }l\in[m],$$
where $\overline{X}_p^{(l)}=\frac{1}{n}(D_{p\times n }^{(l)}\odot X_{p\times n }^{(l)}) \cdot (D_{p\times n }^{(l)}\odot X_{p\times n }^{(l)})^{\ast}$ for $l\in [m]$.
\end{thm}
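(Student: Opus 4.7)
The plan is a moment-method computation: establish $\ast$-convergence of $\overline{X}_p$ by verifying that $\frac{1}{p}\mathrm{E}\operatorname{tr}(\overline{X}_p^{k})\to \int x^{k}\,d\mathrm{MP}_{y}(x)$ for every $k\ge 1$, then upgrade to EESD convergence using that $\mathrm{MP}_y$ is compactly supported, hence moment-determined.

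Necessity follows from the first moment alone:
$$\frac{1}{p}\mathrm{E}\operatorname{tr}(\overline{X}_p)=\frac{1}{pn}\sum_{i=1}^{p}\sum_{j=1}^{n}d_{ij}^{2}\,\mathrm{E}|x_{ij}|^{2}=\frac{1}{pn}\sum_{i,j}d_{ij},$$
and $\int x\,d\mathrm{MP}_y=1$, so the density condition is forced. For sufficiency I would expand
$$\frac{1}{p}\mathrm{E}\operatorname{tr}(\overline{X}_p^{k})=\frac{1}{p\,n^{k}}\sum_{\vec i,\vec j}\Bigg(\prod_{s=1}^{k}d_{i_s j_s}d_{i_{s+1}j_s}\Bigg)\mathrm{E}\Bigg[\prod_{s=1}^{k}x_{i_s j_s}\overline{x_{i_{s+1}j_s}}\Bigg]$$
with $i_{k+1}=i_{1}$, $\vec i\in[p]^{k}$, $\vec j\in[n]^{k}$, and decompose the expectation via a standard cumulant/pair-partition expansion. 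The uniform moment bounds in Assumption~\ref{assu 1} make higher joint cumulants bounded and force additional index identifications, so their contribution is $O(n^{-1})$; only pair-partition terms survive.

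I would then classify the surviving pair partitions exactly as in the proofs of Theorems~\ref{thm1} and \ref{thm2}. The leading contributions come from the non-crossing ``$y/y^{\ast}$-alternating'' pair partitions (Dyck-path pairings for $(YY^{\ast})^{k}$), each leaving some number $r$ of independent row-indices in $[p]$ and $k+1-r$ independent column-indices in $[n]$. Crossing pair partitions lose a free index (the standard genus argument), and the pair partitions that invoke the elliptic covariance $\rho=\mathrm{E}(x_{ij}x_{ji})$ are further suppressed in the rectangular regime because they identify a $[p]$-index with an $[n]$-index, confining the relevant indices to $[\min(p,n)]$. Both families therefore contribute $o(1)$. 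For each leading non-crossing $\pi$, the relevant sum reads
$$\frac{1}{p\,n^{k}}\sum_{\text{free indices}}\,\prod_{(s,t)\in\pi}d_{i_s j_t},$$
and because $d_{ij}\in\{0,1\}$ with $|\{(i,j):d_{ij}=0\}|=o(pn)$, a union bound on the exceptional set lets one replace each $d$-product by $1$ with an $o(1)$ error. The resulting unweighted count $p^{r}n^{k+1-r}/(pn^{k})\to y^{r-1}$, summed over non-crossing pair partitions with $r$ free row-indices via the Narayana counts, reproduces $\int x^{k}\,d\mathrm{MP}_{y}=\sum_{r=1}^{k}\frac{1}{r}\binom{k}{r-1}\binom{k-1}{r-1}y^{r-1}$. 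Compact support of $\mathrm{MP}_y$ then upgrades convergence of moments to weak convergence of the EESD.

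For the joint/freeness statement, the same expansion is applied to mixed traces $\frac{1}{p}\mathrm{E}\operatorname{tr}\bigl(\overline{X}_p^{(l_1)}\cdots\overline{X}_p^{(l_k)}\bigr)$. Independence across $l\in[m]$ forces every surviving pair partition to lie inside a single colour-block, so the non-crossing pair partitions respecting the block structure $(l_1,\dots,l_k)$ are exactly those that describe the joint moments of freely independent $\mathrm{MP}_y$ variables, yielding the asymptotic freeness. The step I expect to be the main obstacle is justifying uniformly that each product $\prod_{(s,t)\in\pi}d_{i_s j_t}$ can be replaced by $1$ at cost $o(1)$, since it can have up to $2k$ factors entangled through shared indices and a naive union bound loses factors in $k$. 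The cleanest route is a short combinatorial lemma asserting that in a $\{0,1\}$-matrix whose average density is $1-\varepsilon_n$, any fixed-shape incidence count of such a $d$-product differs from its all-ones analogue by $o(p\,n^{k})$; once that is established, the rest of the argument is the standard rectangular non-crossing bookkeeping underlying the Mar\v{c}enko--Pastur law.
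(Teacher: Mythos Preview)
Your outline is correct and follows the same moment--method skeleton as the paper's proof: trace expansion, reduction to pair partitions via the uniform moment bounds, elimination of crossing partitions by the genus/free-index count, and identification of the surviving non-crossing contributions with the $\mathrm{MP}_y$ moments. There are two points worth noting where your route and the paper's diverge.

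\textbf{Handling of the $d$-product.} The paper does not use a union bound. Instead it proves dedicated combinatorial lemmas (Lemmas~\ref{lem 3} and~\ref{lem 4}): one first shows that all but $\epsilon p$ rows and $\epsilon n$ columns are ``$\epsilon$-dense'', then iteratively peels off one free index at a time from the product $\prod_{(r,s)\in\pi} d_{i_r i_s}$, restricting each step to nested good sets $A^{(l)}_{\epsilon,n,p}$, $B^{(l)}_{\epsilon,n,p}$, to obtain the lower bound $p^{u}n^{k+1-u}(1-k\epsilon)^{k+1}$. Your union bound is in fact simpler and works: for a non-crossing $\pi$ the two indices $i_r,i_s$ in each factor lie in distinct cycles of $\gamma\pi$ (one odd, one even), so $\sum_{I''_{2k}(\pi)}(1-d_{i_r i_s})=p^{u-1}n^{k-u}\cdot (pn-\sum_{i,j}d_{ij})=o(p^{u}n^{k+1-u})$, and summing over the $k$ pairs costs only a harmless factor $k$. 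Your stated worry about ``losing factors in $k$'' is unfounded because $k$ is fixed throughout.

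\textbf{Two smaller differences.} First, the paper identifies the limit by comparing to the Gaussian model $\overline Z_p=\frac{1}{n}Z_{p\times n}Z_{p\times n}^{\ast}$ (Fact~\ref{fact 3} and Wick's formula) rather than computing the $\mathrm{MP}_y$ moments directly via Narayana numbers; similarly, for asymptotic freeness it compares to independent Gaussian copies, citing \cite{capitaine2004asymptotic}, instead of matching moments against free $\mathrm{MP}_y$ variables. Both routes are equivalent. Second, your explanation of why the $\rho$-terms vanish is slightly off: the confinement to $[\min(p,n)]$ only costs a constant factor, not $o(1)$. The actual mechanism (which the paper uses) is that every block of a non-crossing pair partition of $[2k]$ pairs an odd position with an even one, so $\delta_{\epsilon_r\epsilon_s}=0$ for each surviving pair and the $\rho$-coefficient never appears. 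This does not affect the validity of your argument.
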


\section{Proofs of Theorem \ref{thm1} and \ref{thm2}}\label{sec4} 
Before presenting the proofs of Theorem \ref{thm1} and \ref{thm2}, we state the following fact and lemmas, which will be used in the argument. We give the proofs of these lemmas at the end of this section.

\begin{assu}\label{assu 2}
Let $D_n=(d_{ij})$ be an $n\times n$ deterministic matrix whose  entries satisfy the following condition
\[
d_{ij}\in \{0,1\} \mbox{ for $1\leq i,j \leq n$ and } \frac{1}{n^{2}}\sum_{i,j=1}^{n}d_{ij}\xrightarrow{} 1  \text{ as }n\to \infty.
\] 
For rectangular matrices, we adapt both the assumptions in a natural way.
\end{assu}
\begin{lemma}\label{lem 1}
      Let $D_{n}=(d_{ij})_{n \times n}$ be a sequence of deterministic matrices whose entries satisfy Assumption \ref{assu 2}. Define, for $\epsilon > 0$,
      $$
      A_{\epsilon,n}=\{1\leq i \leq n : \frac{1}{n}\sum_{j=1}^{n}d_{ij} \geq 1-\epsilon \} \text{ and }B_{\epsilon,n}=\{1\leq j \leq n : \frac{1}{n}\sum_{i=1}^{n}d_{ij} \geq 1-\epsilon \}.
      $$
       Then there exist $ N_{\epsilon} \in \mathbb{N} \text{ (depending on $\eps$) such that }$ 
       \[
       |A_{\epsilon,n}|\geq  n -\epsilon n \mbox{ and } |B_{\epsilon,n}|\geq n-\epsilon n, \mbox{ for all } n \geq N_{\epsilon} .
       \]
 \end{lemma}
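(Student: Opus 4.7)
The plan is a direct averaging (Markov-type) argument. Write $r_i := \frac{1}{n}\sum_{j=1}^n d_{ij} \in [0,1]$ for the normalized $i$-th row sum, and note that Assumption \ref{assu 2} says
\[
\frac{1}{n}\sum_{i=1}^n r_i \;=\; \frac{1}{n^2}\sum_{i,j=1}^n d_{ij} \;\longrightarrow\; 1 \quad \text{as } n\to\infty.
\]
Fix $\epsilon>0$. The goal is to show that only few indices $i$ can have $r_i < 1-\epsilon$.

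First I would split the sum of $r_i$ according to membership in $A_{\epsilon,n}$:
\[
\frac{1}{n}\sum_{i=1}^n r_i \;=\; \frac{1}{n}\!\!\sum_{i\in A_{\epsilon,n}} \!\! r_i \;+\; \frac{1}{n}\!\!\sum_{i\notin A_{\epsilon,n}}\!\! r_i \;\leq\; \frac{|A_{\epsilon,n}|}{n} \cdot 1 \;+\; \frac{n - |A_{\epsilon,n}|}{n}\cdot(1-\epsilon),
\]
using $r_i\leq 1$ in the first sum and $r_i < 1-\epsilon$ in the second. Simplifying, this gives
\[
\frac{1}{n}\sum_{i=1}^n r_i \;\leq\; (1-\epsilon) + \epsilon\,\frac{|A_{\epsilon,n}|}{n}.
\]
Next, given $\epsilon>0$, choose $N_\epsilon$ so large that $\frac{1}{n^2}\sum_{i,j} d_{ij} \geq 1-\epsilon^2$ for every $n\geq N_\epsilon$; this is possible by Assumption \ref{assu 2}. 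Combining the two inequalities yields
\[
1-\epsilon^2 \;\leq\; (1-\epsilon) + \epsilon\,\frac{|A_{\epsilon,n}|}{n},
\]
and solving gives $|A_{\epsilon,n}|\geq n(1-\epsilon) = n - \epsilon n$, as desired. The analogous bound for $|B_{\epsilon,n}|$ follows verbatim after replacing $r_i$ by the column sum $c_j := \frac{1}{n}\sum_{i=1}^n d_{ij}$ and observing $\frac{1}{n}\sum_j c_j = \frac{1}{n^2}\sum_{i,j} d_{ij}$, so exactly the same averaging works; if the two resulting thresholds differ, take the larger one as $N_\epsilon$.

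There is no real obstacle here: the statement is essentially a deterministic Markov inequality applied to the sequence $(r_i)$, exploiting that $r_i\in[0,1]$ and that its average tends to the maximal value $1$. The only point worth highlighting is the quantitative calibration ($\epsilon$ in the conclusion versus the rate $\epsilon^2$ required of the hypothesis for a given $n$), which forces the dependence of $N_\epsilon$ on $\epsilon$.
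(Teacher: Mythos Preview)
Your proof is correct and uses essentially the same idea as the paper's: split $\sum_{i,j} d_{ij}$ into the contributions from rows in $A_{\epsilon,n}$ and rows in $A_{\epsilon,n}^c$, bound each piece by $1$ and $1-\epsilon$ respectively, and compare with the hypothesis that the normalized total tends to $1$. The only cosmetic difference is that the paper argues by contradiction (assuming $|A_{\epsilon_0,n_k}|<n_k-\epsilon_0 n_k$ along a subsequence and deriving $\frac{1}{n_k^2}\sum d_{ij}<1-\epsilon_0^2$), whereas you proceed directly and make the quantitative trade-off $\epsilon \leftrightarrow \epsilon^2$ explicit; the underlying inequality is identical.
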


 \begin{lemma}\label{lem 2}
     Let $D_{n}=(d_{ij})_{n \times n}$ be a sequence of deterministic matrices whose entries satisfy Assumption \ref{assu 2} and $I_{2k}$ be the set of 2k indices in which each index varies from 1 to n with a number of independent indices equal to k+1. Then for $ \epsilon >0 $ there exists  $N_\epsilon \in \mathbb{N}$ such that 
 $$\displaystyle\sum_{I_{2k}}d_{i_{1}i_{2}}d_{i_{3}i_{4}}\cdots d_{i_{2k-1}i_{2k}}\geq (n-k\epsilon n)^{k+1}, \text{ for all } n \geq N_\epsilon.$$
\end{lemma}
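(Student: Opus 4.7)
The plan is to exploit Lemma \ref{lem 1} to produce a large ``good'' sub-collection of indices on which every factor $d_{i_{2r-1}i_{2r}}$ equals $1$, and then count tuples by walking along the tree structure that underlies a configuration with $k+1$ independent indices.

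First I would pin down the combinatorial picture. Among the $2k$ positions in $(i_1,i_2,\dots,i_{2k})$, the condition ``$k+1$ independent indices'' means the equality pattern of the positions has exactly $k+1$ equivalence classes; identifying positions with their classes turns the $k$ edges $\{2r-1,2r\}$ into a graph on $k+1$ labelled vertices with $k$ edges. A connected graph on $k+1$ vertices with $k$ edges is a tree, so the underlying combinatorial object is a (rooted) labelled tree $T$ on the $k+1$ independent index labels. I would pick a root and a BFS ordering of the vertices, so that each non-root vertex is attached to $T$ by exactly one edge coming from an already-chosen vertex.

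Next I would apply Lemma \ref{lem 1} with a tolerance parameter $\eps' := \eps/3$ (say) to obtain the sets $A_{\eps',n}$, $B_{\eps',n}$, each of size at least $n-\eps' n$, and set $G_{\eps',n}:=A_{\eps',n}\cap B_{\eps',n}$, so $|G_{\eps',n}|\geq n-2\eps' n$ for $n\geq N_{\eps'}$. I would then lower-bound the sum by restricting to tuples whose $k+1$ distinct index values all lie in $G_{\eps',n}$. Building the tuple vertex by vertex along the BFS order:
\begin{itemize}
\item the root can be chosen in at least $n-2\eps' n$ ways (any element of $G_{\eps',n}$);
\item each subsequent vertex $v$, attached to an already-chosen $u\in G_{\eps',n}$ by an edge that corresponds to some factor $d_{u,v}$ or $d_{v,u}$, admits at least $(1-\eps')n$ choices that satisfy the edge constraint (since $u\in A_{\eps',n}\cap B_{\eps',n}$), of which at most $2\eps' n$ fall outside $G_{\eps',n}$, leaving at least $n-3\eps' n$ admissible labels.
\end{itemize}
Multiplying over the $k+1$ vertices, and noting that different trees produce different tuples, gives at least $(n-3\eps' n)^{k+1} = (n-\eps n)^{k+1}$ admissible tuples, each contributing $1$ to the sum. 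After replacing $\eps$ by $\eps/k$ at the outset one obtains the claimed bound $(n-k\eps n)^{k+1}$.

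The main obstacle I anticipate is the bookkeeping: one must choose the BFS order so that the vertex currently being attached is always adjacent to an endpoint lying in both $A_{\eps',n}$ and $B_{\eps',n}$ (since the factor could read $d_{u,v}$ or $d_{v,u}$ depending on which of $2r-1,2r$ the previously chosen label occupies), and one must make sure the label ranges used at different vertices really do give $k+1$ \emph{distinct} values so that the produced tuples are genuinely of the required type. A clean way to handle the latter is to pick the $k+1$ labels \emph{without replacement} from $G_{\eps',n}$, which only costs an extra $k$ in the subtractive term and is absorbed harmlessly into the single $k\eps n$ slack appearing in the statement.
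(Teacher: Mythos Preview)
Your argument is correct and is a close variant of the paper's. Both rely on Lemma~\ref{lem 1} and on the tree structure of the graph whose vertices are the $k+1$ equivalence classes and whose $k$ edges are the factors $d_{i_{2r-1}i_{2r}}$. The paper proceeds by \emph{leaf peeling}: at each stage it locates an independent index occurring in only one remaining factor, sums over it, and restricts the neighbouring index to a good row- or column-set; to keep those restrictions compatible through successive steps it builds nested refinements $A_{\epsilon,n}^{(l)},B_{\epsilon,n}^{(l)}$, each still of size $\geq n-\epsilon n$. Your BFS-from-a-root construction with the single intersection $G_{\epsilon',n}=A_{\epsilon',n}\cap B_{\epsilon',n}$ is a bit cleaner, avoids the nested families, and actually delivers the sharper bound $(n-\epsilon n)^{k+1}$, which already dominates the stated $(n-k\epsilon n)^{k+1}$; your closing remark about replacing $\epsilon$ by $\epsilon/k$ is thus unnecessary (and points the wrong way).

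Two small points. First, you claim the graph is a tree but only observe that a \emph{connected} graph on $k+1$ vertices with $k$ edges is a tree; connectedness itself is never argued. In the paper's applications (where $I_{2k}$ comes from $\pi\in NC_2(2k)$ via the constraints $i_r=i_{\gamma\pi(r)}$) the graph is indeed a tree, and the paper's leaf-peeling tacitly uses the same fact, so this is really a looseness in the lemma's formulation rather than a flaw specific to your proof. Second, your worry about the $k+1$ chosen values being \emph{distinct} is unfounded: ``$k+1$ independent indices'' refers to the number of free coordinates, not to distinctness of their values, so choosing with replacement is fine and the without-replacement bookkeeping you propose can be dropped.
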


\begin{lemma}\label{lem new}
	Suppose $D_{n}^{(1)},\dots,D_n^{(m)}$ are deterministic matrices whose entries satisfy Assumption \ref{assu 2} and $I_{2k}$ is the index set as defined in Lemma \ref{lem 2}. Then for any $\tau_1,\dots,\tau_k \in [m]$ and $\epsilon>0$ there exists $N_0 \in \mathbb{N}$ such that
	$$\displaystyle\sum_{I_{2k}}d_{i_{1}i_{2}}^{(\tau_1)}d_{i_{3}i_{4}}^{(\tau_2)}\cdots d_{i_{2k-1}i_{2k}}^{(\tau_k)}\geq (n-(k+1)\epsilon n)^{k+1}, \text{ for all } n \geq N_0,$$
	where $D_n^{(\tau_r)}=(d_{ij}^{(\tau_r)})$ for $r=1,2,\dots,k$.   
\end{lemma}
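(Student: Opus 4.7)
The plan is to mirror the leaf-peeling (or equivalently inductive) argument that proves Lemma \ref{lem 2}, with the matrix-specific pair of good sets replaced by a single set that is simultaneously good for all the deterministic matrices $D_n^{(1)},\ldots,D_n^{(m)}$.

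First, I would fix $\epsilon > 0$ and apply Lemma \ref{lem 1} to each $D_n^{(l)}$, $l \in [m]$, with parameter $\epsilon$, producing sets $A_{\epsilon,n}^{(l)}$ and $B_{\epsilon,n}^{(l)}$ of cardinality at least $n - \epsilon n$ for all $n$ past a common threshold. Define the \emph{universally good set}
$$
G_n \;:=\; \bigcap_{l=1}^{m} \bigl( A_{\epsilon,n}^{(l)} \cap B_{\epsilon,n}^{(l)} \bigr).
$$
A union bound gives $|G_n| \geq n - 2m\epsilon n$, and for every $i \in G_n$ and every $l \in [m]$ both the $i$-th row and the $i$-th column sums of $D_n^{(l)}$ are bounded below by $(1 - \epsilon)n$.

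Second, I would reinterpret the sum combinatorially. An index tuple in $I_{2k}$ with exactly $k+1$ independent coordinates (as arises, for instance, from a non-crossing pair partition $\pi \in NC_2(2k)$) encodes a labelling by $[n]$ of the vertices of a planar tree $T$ on $k+1$ vertices, in which the $r$-th edge corresponds to the factor $d_{i_{2r-1} i_{2r}}^{(\tau_r)}$ and carries the matrix label $\tau_r \in [m]$. Restricting every vertex-label to $G_n$ only decreases the nonnegative sum, so it suffices to bound this restricted sum from below.

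Third, I would run the leaf-peeling recursion exactly as in Lemma \ref{lem 2}: root $T$ at an arbitrary vertex and sum over leaves from the outermost layer inwards. When peeling a leaf $u$ whose parent $v \in G_n$ is already fixed and whose connecting edge carries label $\tau$,
$$
\sum_{u \in G_n} d_{uv}^{(\tau)} \;\geq\; \sum_{u=1}^{n} d_{uv}^{(\tau)} - |G_n^c| \;\geq\; (1-\epsilon)n - 2m\epsilon n,
$$
since $v \in B_{\epsilon,n}^{(\tau)}$. Iterating through all $k$ edges and multiplying by $|G_n|$ for the root gives a product of $k+1$ factors, each at least $n - (2m+1)\epsilon n$. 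Because $\epsilon$ is arbitrary, applying the whole argument with $\epsilon$ rescaled by $(k+1)/(2m+1)$ (absorbed into the threshold $N_0$) delivers the claimed bound $(n - (k+1)\epsilon n)^{k+1}$. The main obstacle is purely bookkeeping: replacing the per-matrix good sets $A_{\epsilon,n}^{(l)} \cap B_{\epsilon,n}^{(l)}$ by the single uniform set $G_n$ inflates the bad set by a factor of order $m$, but the freedom to take $\epsilon$ arbitrarily small makes this loss harmless.
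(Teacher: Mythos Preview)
Your proposal is correct and follows essentially the same leaf-peeling strategy as the paper's (very brief) sketch: apply Lemma~\ref{lem 1} to each $D_n^{(l)}$, then peel off one free index at a time as in Lemma~\ref{lem 2}. Your device of a single universally good set $G_n=\bigcap_l(A_{\epsilon,n}^{(l)}\cap B_{\epsilon,n}^{(l)})$ is a minor organizational simplification of the paper's iterated restrictions; it produces the constant $(2m+1)$ instead of $(k+1)$, but as you note this is absorbed by the final $\epsilon$-rescaling.
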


\noindent Now we state a few facts that will be used in the proofs of the theorems. First consider the following notations:
\begin{align*}
	a'(r, s) &= \delta_{i_{r}i_{s}}\delta_{i_{r+1}i_{s+1}}
	\mbox{ and }\;\;
	b'(r, s) = \delta_{i_{r}i_{s+1}}\delta_{i_{s}i_{r+1}}.
\end{align*}

\begin{fact}(Lemma 2 of \cite{adhikari2019brown})\label{fact 2}
	For $ a'(r, s) $, $ b'(r, s) $ as defined above and $k\geq 1$, we have
	\[
	\frac{1}{n^{k+1}} \sum_{I_{2k}} \sum_{\ell=1}^{k} \sum_{1 \leq j_1 < \cdots < j_\ell \leq k} a'(r_{j_1}, s_{j_1}) \cdots a'(r_{j_\ell}, s_{j_\ell}) b'(r_{j_{\ell+1}}, s_{j_{\ell+1}}) \cdots b'(r_{j_k}, s_{j_k}) 
	\to 0, \]
	as $n\to\infty$.
\end{fact}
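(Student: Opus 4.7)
The plan is to fix a pair partition $\pi$ with pairs $(r_j, s_j)$ for $j = 1, \ldots, k$, and for each non-empty subset $J \subseteq \{1, \ldots, k\}$ bound
\[
S_{\pi, J} := \sum_{I_{2k}} \prod_{j \in J} a'(r_j, s_j) \prod_{j \notin J} b'(r_j, s_j).
\]
Only finitely many (at most $2^k - 1$) subsets contribute, so showing $S_{\pi, J} = O(n^k)$ for every non-empty $J$ would give $\frac{1}{n^{k+1}} \sum_{J \neq \emptyset} S_{\pi, J} = O(1/n) \to 0$ as required. Because $S_{\pi, J}$ counts tuples in $I_{2k}$ satisfying the delta identifications, one has $S_{\pi, J} = n^{V(\pi, J)}$, where $V(\pi, J)$ is the number of equivalence classes on $\{1, \ldots, 2k\}$ induced by those identifications. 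The target therefore reduces to the combinatorial bound
\[
V(\pi, J) \leq k \quad \text{whenever } J \neq \emptyset.
\]

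To establish the bound I would appeal to the standard polygon-gluing picture. Regard $\{1, \ldots, 2k\}$ cyclically (with $2k+1 \equiv 1$) as the vertices of a $2k$-gon with oriented edges $e_r$ from $r$ to $r+1$. The $b'$-identification of the pair $(r_j, s_j)$ glues $e_{r_j}$ to $e_{s_j}$ in orientation-\emph{reversing} fashion (producing the deltas $i_{r_j} = i_{s_j+1}$ and $i_{r_j+1} = i_{s_j}$), while the $a'$-identification is orientation-\emph{preserving} (producing $i_{r_j} = i_{s_j}$ and $i_{r_j+1} = i_{s_j+1}$). The quotient is a closed connected surface $\Sigma_{\pi, J}$ carrying a CW-structure with $V(\pi, J)$ vertices, $k$ edges and $1$ face, so
\[
\chi(\Sigma_{\pi, J}) = V(\pi, J) - k + 1.
\]
The classical orientability criterion for polygon-gluings states that $\Sigma_{\pi, J}$ is orientable iff every pairing is orientation-reversing, i.e.\ iff $J = \emptyset$. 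Hence for $J \neq \emptyset$ the surface $\Sigma_{\pi, J}$ is non-orientable and $\chi(\Sigma_{\pi, J}) \leq 1$, giving $V(\pi, J) \leq k$.

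The main obstacle is the clean verification of the orientability criterion in the article's framework: namely, that even a single $a'$-identification forces the glued surface to be non-orientable, so $\chi \leq 1$. This is standard in ribbon-graph / map theory but cumbersome to set up from scratch, which is presumably why the article prefers to cite Lemma~2 of \cite{adhikari2019brown}. If a topology-free exposition is preferred, one can argue by induction on $|J|$ instead: the base case $J = \emptyset$ recovers the classical bound $V \leq k + 1$, with equality iff $\pi$ is non-crossing; the inductive step observes that swapping a pair from $b'$ to $a'$ replaces two cross-matched edges by two parallel edges among the same four vertices, a local modification that never increases the component count and strictly decreases it from $k+1$ whenever the preceding configuration was a non-crossing all-$b'$ sphere. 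Either route delivers the bound $V(\pi, J) \leq k$ for $J \neq \emptyset$ and hence Fact~\ref{fact 2}.
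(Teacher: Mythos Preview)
The paper does not prove this statement at all; it is recorded as a \emph{Fact} and attributed to Lemma~2 of \cite{adhikari2019brown}. So there is no in-paper argument to compare against, and your task was really to supply a proof.

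Your main argument is correct. Fixing $\pi$ and a non-empty $J\subseteq[k]$, the delta constraints indeed give $S_{\pi,J}=n^{V(\pi,J)}$ with $V(\pi,J)$ the number of vertex classes in the polygon gluing; the CW count yields $\chi=V-k+1$; and the orientability criterion (a single orientation-preserving pair forces non-orientability) gives $\chi\le 1$, hence $V\le k$. Since there are only $2^{k}-1$ choices of $J$, the whole expression is $O(n^{-1})$. This topological route is the standard ``genus expansion'' proof of such estimates and is exactly the kind of argument the cited lemma encodes.

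One correction, however: your backup ``topology-free'' inductive sketch is wrong as stated. The claim that swapping a pair from $b'$ to $a'$ \emph{never increases} $V$ fails. Take $k=2$ with the crossing pair partition $\pi=\{\{1,3\},\{2,4\}\}$. With both pairs assigned $b'$ one gets the torus and $V=1$; with both pairs assigned $a'$ one gets $\mathbb{RP}^2$ and $V=2$. So $V$ can go up under a $b'\to a'$ swap, and no naive monotonicity argument will work. If you want an induction-style proof you would have to argue more carefully (e.g.\ reduce first to the case where all $b'$-pairs are non-crossing among themselves, or track the parity/orientability explicitly at each step), but at that point you are essentially redoing the surface classification. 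I would simply drop the inductive paragraph and keep the Euler-characteristic proof, which is clean and complete.
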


\begin{fact}(Lemma 1 and Theorem 2 of \cite{adhikari2019brown})\label{fact 4}
	Let \( e \) be an elliptic variable with parameter \( \rho \) in a  $\ast$-probability space \( (\mathcal{A}, \varphi) \). Then, for \( \epsilon_1, \ldots, \epsilon_p \in \{1, *\} \), we have:
	\[
	\varphi(e^{\epsilon_1} e^{\epsilon_2} \cdots e^{\epsilon_p}) =
	\begin{cases}
		\displaystyle\sum_{\pi \in \mathit{NC}_{2}(2k)}\prod_{(r,s) \in \pi}[(1-\delta_{\epsilon_r \epsilon_s})+\delta_{\epsilon_r \epsilon_s} \rho] & \text{if } p = 2k , \\
		0 & \text{if } p = 2k + 1.
	\end{cases}
	\]
	%where, $\mathit{T}(\pi)=\#\{(r,s)\in\pi\text{ : }\delta_{\epsilon_{r}\epsilon_{s}}=1\}$.
	%\\
	Moreover, if $e_1,\dots,e_m$ are free elliptic variables with parameters $\rho_1,\dots,\rho_m$ respectively. Then for any $k\geq 1$, $\tau_1,\dots,\tau_{2k} \in [m]$, $\epsilon_1,\epsilon_2,\dots,\epsilon_{2k} \in \{1,\ast\}$ we have
	$$\varphi(e_{\tau_1}^{\epsilon_1}\cdots e_{\tau_{2k}}^{\epsilon_{2k}})=\sum_{\pi \in NC_2(2k)}\prod_{(r,s) \in \pi} [(1-\delta_{\epsilon_{r} \epsilon_s})+\delta_{\epsilon_r \epsilon_s}\rho_{\tau_r}]\delta_{\tau_r \tau_s}.$$ 
	In particular, for $\rho=1$ and $\rho=0$ give the moments of a semi-circular variable and circular  variable respectively.  
\end{fact}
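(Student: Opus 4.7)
The plan is to reduce everything to the Wick-type formula for a free semi-circular family. Writing $\alpha = \sqrt{(1+\rho)/2}$, $\beta = \sqrt{(1-\rho)/2}$, and setting $\iota(1) = i$, $\iota(\ast) = -i$, the defining expression for $e$ gives the uniform identity $e^{\epsilon} = \alpha s_1 + \iota(\epsilon)\beta s_2$, where $s_1, s_2$ are free centred semi-circulars. I take as input the standard combinatorial description of a free semi-circular family, namely $\varphi(s_{j_1}\cdots s_{j_p}) = \sum_{\pi\in NC_2(p)}\prod_{(r,s)\in\pi}\delta_{j_r j_s}$ when $p$ is even and $0$ otherwise.

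First, I would compute the two-point function directly. A short expansion gives $\varphi(e^{\epsilon_r} e^{\epsilon_s}) = \alpha^2 + \iota(\epsilon_r)\iota(\epsilon_s)\beta^2$, which equals $\alpha^2 - \beta^2 = \rho$ when $\epsilon_r = \epsilon_s$ and $\alpha^2 + \beta^2 = 1$ otherwise, matching exactly $(1-\delta_{\epsilon_r\epsilon_s}) + \delta_{\epsilon_r\epsilon_s}\rho$. This identifies the covariance datum that must appear block by block in the higher moments.

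Next, multilinearly expanding $\varphi(e^{\epsilon_1}\cdots e^{\epsilon_p})$ yields
$$
\sum_{\mathbf{j}\in\{1,2\}^p}\Bigl(\prod_{t=1}^{p}c_t(j_t)\Bigr)\varphi(s_{j_1}\cdots s_{j_p}),
$$
where $c_t(1) = \alpha$ and $c_t(2) = \iota(\epsilon_t)\beta$. The odd case then vanishes instantly. For $p = 2k$, substituting the free Wick sum and exchanging the $\pi$- and $\mathbf{j}$-sums reduces the problem, for each fixed $\pi \in NC_2(2k)$, to $\prod_{(r,s)\in\pi}\sum_{j\in\{1,2\}}c_r(j)c_s(j)$, since the Kronecker factors $\delta_{j_r j_s}$ decouple across the blocks of the pair partition. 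Each block collapses to the two-point value from the previous step, yielding the stated formula.

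For the joint statement with free elliptic variables $e_1,\dots,e_m$, the same scheme goes through after realising each $e_\ell = \alpha_\ell s_1^{(\ell)} + i\beta_\ell s_2^{(\ell)}$ with the pairs $\{s_1^{(\ell)}, s_2^{(\ell)}\}$ free across $\ell$. The Wick formula for the enlarged semi-circular family is identical in shape, but a nonzero contribution from a block $(r,s)$ now also forces $\tau_r = \tau_s$, producing the extra $\delta_{\tau_r\tau_s}$ factor. The step I expect to need the most care is the block-wise factorisation of the $\mathbf{j}$-sum: since the coefficients $c_t(j_t)$ are attached to single positions while $\pi$ couples them in pairs, one has to verify that the multilinear expansion really decouples on each block of $\pi$ — a verification that leans on the pair-partition structure rather than any deeper free-probabilistic input.
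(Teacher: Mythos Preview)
Your argument is correct. The paper does not supply its own proof of this statement: it is recorded as a Fact and attributed to Lemma~1 and Theorem~2 of \cite{adhikari2019brown}, so there is nothing in the present paper to compare your approach against.

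On the substance, your route via the free Wick formula for the underlying semi-circular family is the standard one and goes through cleanly. The block-wise factorisation you flag as the delicate step is in fact immediate: for a pair partition $\pi$ every position $t\in\{1,\dots,2k\}$ lies in exactly one block, so the constraints $\prod_{(r,s)\in\pi}\delta_{j_r j_s}$ partition the coordinates of $\mathbf{j}$ into disjoint pairs, and the sum $\sum_{\mathbf{j}}\prod_t c_t(j_t)\prod_{(r,s)\in\pi}\delta_{j_r j_s}$ factors as $\prod_{(r,s)\in\pi}\bigl(c_r(1)c_s(1)+c_r(2)c_s(2)\bigr)$ by Fubini for finite sums. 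No non-crossing hypothesis is needed at this step; the restriction to $NC_2$ already came from the Wick formula. For the joint statement, the only additional ingredient is that freeness of the pairs $\{s_1^{(\ell)},s_2^{(\ell)}\}$ across $\ell$, together with freeness of $s_1^{(\ell)},s_2^{(\ell)}$ within each pair, makes the full collection $\{s_j^{(\ell)}:j=1,2,\ \ell\in[m]\}$ a free semi-circular family (associativity of freeness), after which the same computation yields the extra $\delta_{\tau_r\tau_s}$.
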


\begin{fact}(Page 367 of \cite{nica2006lectures})\label{fact 1}
	Let $\pi \in \mathcal{P}_2(2k)$ and $|\gamma_\pi|$ denote the cardinality of the set of partition blocks in $\gamma_\pi$. Then $|\gamma_\pi| \leq k + 1$, and equality holds if and only if $\pi \in \mathit{NC}_2(2k)$.
\end{fact}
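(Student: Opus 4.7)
\noindent\textbf{Proof plan for Fact~\ref{fact 1}.}
The plan is to view $|\gamma\pi|$ through the (left) Cayley metric $d(\sigma,\tau)=2k-|\sigma^{-1}\tau|$ on $S_{2k}$ and then to characterize the saturation condition combinatorially. The key elementary input is that multiplying a permutation by a single transposition changes its cycle count by exactly $\pm 1$ (splitting one cycle into two, or merging two into one), so $d$ is a genuine metric on $S_{2k}$. Applying the triangle inequality $d(e,\gamma)\le d(e,\pi)+d(\pi,\gamma)$ and substituting $|\gamma|=1$, $|\pi|=k$, and $|\pi\gamma|=|\gamma\pi|$, one obtains $2k-1\le(2k-k)+(2k-|\gamma\pi|)$, which rearranges to $|\gamma\pi|\le k+1$.

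Equality in the triangle inequality means $\pi$ lies on a geodesic from the identity to $\gamma$ in the Cayley graph, equivalently $\pi$ extends to a minimal factorization of $\gamma$ into $2k-1$ transpositions, equivalently $\pi$ sits below $\gamma$ in the absolute order $\preceq$. For a pair partition $\pi\in\mathcal{P}_2(2k)$ the characterization $\pi\preceq\gamma\iff\pi\in\mathit{NC}_2(2k)$ can be verified directly: given $\pi\in\mathit{NC}_2(2k)$ the standard ``innermost arc'' argument produces a nearest-neighbor pair $\{i,i{+}1\}\in\pi$, and a direct computation shows that $\gamma\cdot(i\,i{+}1)$ splits off the fixed point $\{i{+}1\}$ together with a $(2k{-}1)$-cycle on the remaining indices; induction on $k$ then shows that the remaining transpositions of $\pi$ continue to split cycles and yield $|\gamma\pi|=k+1$.

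The main obstacle is the converse direction: that any crossing forces $|\gamma\pi|<k+1$. The cleanest route is the Riemann--Hurwitz identity applied to the triple $(\gamma,\pi,(\gamma\pi)^{-1})$, whose product is the identity and whose generating group acts transitively since $\gamma$ is already a $2k$-cycle. This identity yields $|\gamma|+|\pi|+|\gamma\pi|=2k+2-2g$ for an integer genus $g\ge 0$, hence $|\gamma\pi|=k+1-2g$, so equality is equivalent to $g=0$; the latter is in turn equivalent to planarity of the chord diagram of $\pi$ on the disk, i.e., to $\pi\in\mathit{NC}_2(2k)$. A purely combinatorial alternative traces the effect of a single crossing $\{a,c\},\{b,d\}$ with $a<b<c<d$ and shows it forces at least one merge step in every reduced decomposition of $\gamma\pi$, but the genus argument encapsulates this defect most efficiently, and either route completes the proof.
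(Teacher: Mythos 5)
The paper does not prove this statement at all: it is imported as a black box, cited to page 367 of \cite{nica2006lectures}, so there is no internal proof to compare against. Your argument is essentially the textbook proof. The inequality via the Cayley metric $d(\sigma,\tau)=2k-|\sigma^{-1}\tau|$, the triangle inequality $d(e,\gamma)\le d(e,\pi)+d(\pi,\gamma)$ together with $\pi^{-1}=\pi$ and the conjugacy $|\pi\gamma|=|\gamma\pi|$, and the identification of the equality case with $\pi$ lying on a geodesic from $e$ to $\gamma$ (Biane's embedding of $\mathit{NC}(2k)$ into $S_{2k}$) is exactly the route taken in Nica--Speicher, and your ``innermost arc'' induction for the non-crossing direction is sound. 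Where you genuinely diverge is in handling the converse: the Riemann--Hurwitz/genus identity $|\gamma|+|\pi|+|\gamma\pi|=2k+2-2g$ with $g\ge 0$ an integer does yield $|\gamma\pi|=k+1-2g$ and reduces equality to $g=0$, but the final step ``$g=0$ iff the chord diagram of $\pi$ is planar iff $\pi\in\mathit{NC}_2(2k)$'' is itself a nontrivial standard fact (the identification of the combinatorial genus with the topological genus of the associated map) that you invoke rather than prove; if you want a self-contained argument you should either prove that identification or run the purely combinatorial alternative you mention, showing that a crossing pair forces a cycle merge. As a verification of a cited textbook fact, the proposal is correct and at an appropriate level of detail.
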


\noindent 
For ease of writing, we use the following notation in the proof.
\begin{align*}
{\mathit{A}_{n}} &= n^{-1/2}Y_{n}, \mbox{ where }\;\;	Y_{n}=D_{n}\odot X_{n}=(y_{ i,j})_{n \times n},
	\\
	y_{ij}^{\epsilon} &=
	\begin{cases}
		y_{i,j}       & \quad \text{if } \epsilon \text{ = 1,}\\
		y_{j,i}  & \quad \text{if } \epsilon \text{ = }
		\ast.  \end{cases}
\end{align*}

\noindent Assuming all the lemmas and facts, we proceed to prove Theorem \ref{thm1}. 

\begin{proof} [Proof of Theorem \ref{thm1}]
	{\it The condition is necessary:} Suppose $A_{n}$ converges in $\ast$-distribution to a circular variable $c$.  We have
	\[
	\varphi_{n}(A_{n}A_{n}^{\ast}) = \frac{1}{n^{2}}\mathrm{E}[\mathrm{Tr}(Y_{n}Y_{n}^{\ast})]=
	 \frac{1}{n^2}\displaystyle\sum_{i,j=1}^{n}\mathrm{E}[d_{ij}^2 X_{ij}^2].
	\]
	By Assumption \ref{assu 1} we have $\E[X_{ij}^2]=1$ for all $1\le i,j\le n$. Since $d_{ij}^2=d_{ij}$, we get
\begin{align*}
    \varphi_{n}(A_{n}A_{n}^{\ast}) 
      &= \frac{1}{n^2}\displaystyle\sum_{i,j=1}^{n}d_{ij}.
\end{align*}
On the other hand, $A_n\xrightarrow{\ast} c$  implies that $\varphi_{n}(A_{n}A_{n}^{\ast}) \to \varphi(cc^{\ast})$ as $n\to \infty$. Therefore, as  $\varphi(cc^{\ast})=1$, we have 
\[
  \frac{1}{n^2}\displaystyle\sum_{i,j=1}^{n}d_{ij} \to 1, \mbox{ as $n\to \infty$.}
\]
 Hence the condition is necessary.

\vspace{.1cm}
\noindent {\it The condition is sufficient:} Let $t\in \N$ and  $\epsilon_{1},\epsilon_{2}\dots,\epsilon_{t}\in\{1,\ast \}$. Then we have 
    \begin{align*}
        \varphi_{n} 
 ({\mathit{A}}_{n}^{\epsilon_{1}} {\mathit{A}}_{n}^{\epsilon_{2}} \cdots 
  {\mathit{A}}_{n}^{\epsilon_{t}})&= \frac{1}{n^{\frac{t}{2}+1}}\mathrm{E}[\mathrm{Tr}({\mathit{A}}_{n}^{\epsilon_{1}}{\mathit{A}}_{n}^{\epsilon_{2}}\cdots{\mathit{A}}_{n}^{\epsilon_{t}})]
    \\ &= \frac{1}{n^{\frac{t}{2}}+1}\displaystyle\sum_{\mathit{I}_{t}}\mathrm{E}[y_{i_{1}i_{2}}^{\epsilon_{1}}y_{i_{2}i_{3}}^{\epsilon_{2}}\cdots y_{i_{t}i_{1}}^{\epsilon_{t}}], 
    \end{align*}
where $\mathit{I}_{t}=\{(i_{1},i_{2},\dots,i_{t}) \in \mathbb{N}^t \text{ : }  1 \leq i_{j} \leq n, 1 \leq j \leq t   \}$.

\vspace{.2cm}
\noindent \textbf{Case (i) : }When t is odd. Then we have 
\begin{align*}
        \left| \text{ } \frac{1}{n^{\frac{t}{2}}+1}\displaystyle\sum_{\mathit{I}_{t}}\mathrm{E}[y_{i_{1}i_{2}}^{\epsilon_{1}}y_{i_{2}i_{3}}^{\epsilon_{2}}\cdots y_{i_{t}i_{1}}^{\epsilon_{t}}] \text{ }\right| &\leq   \frac{1}{n^{\frac{t}{2}}+1}\displaystyle\sum_{\mathit{I}_{t}}\mathrm{E}[|x_{i_{1}i_{2}}^{\epsilon_{1}}x_{i_{2}i_{3}}^{\epsilon_{2}}\cdots x_{i_{t}i_{1}}^{\epsilon_{t}}|] 
        \\
        &\leq \frac{O(n^{\lfloor\frac{t}{2}\rfloor+1})}{n^{\frac{t}{2}+1}}
       \rightarrow  0, 
\end{align*}
as $n\to \infty$. For more details see the proof of Theorem 1 in \cite{adhikari2019brown}.

\vspace{.2cm}
 \noindent \textbf{Case (ii) : } Suppose t is even say 2k. Then by Assumption \ref{assu 1} we have
 \\
 $\varphi_{n} 
 ({\mathit{A}}_{n}^{\epsilon_{1}} {\mathit{A}}_{n}^{\epsilon_{2}} \cdots 
 {\mathit{A}}_{n}^{\epsilon_{2k}})$
        \begin{align*}
  &= \frac{1}{n^{k+1}}\displaystyle\sum_{\mathit{I}_{2k}}\mathrm{E}[y_{i_{1}i_{2}}^{\epsilon_{1}}y_{i_{2}i_{3}}^{\epsilon_{2}}\cdots y_{i_{2k}i_{1}}^{\epsilon_{2k}}] 
  \\
  &= \frac{1}{n^{k+1}}\displaystyle\sum_{\mathit{\pi}\in\mathcal{P}_{2}(2k)}\displaystyle\sum_{\mathit{I}_{2k}(\pi)}\displaystyle\prod_{(r,s)\in\pi}\mathrm{E}[y_{i_{r}i_{r+1}}^{\epsilon_{r}}y_{i_{s}i_{s+1}}^{\epsilon_{s}}] + o(1)
  \\
  &= \frac{1}{n^{k+1}}\displaystyle\sum_{\mathit{\pi}\in\mathcal{P}_{2}(2k)}\displaystyle\sum_{\mathit{I}_{2k}(\pi)}\displaystyle\prod_{(r,s)\in\pi}d_{i_{r}i_{r+1}}^{\epsilon_{r}}d_{i_{s}i_{s+1}}^{\epsilon_{s}}(\delta_{\epsilon_{r}\epsilon_{s}}a'(r,s) +(1-\delta_{\epsilon_{r}\epsilon_{s}})b'(r,s)) + o(1).
\end{align*}
Note that $d_{i_{r}i_{r+1}}^{\epsilon_{r}}d_{i_{s}i_{s+1}}^{\epsilon_{s}}\delta_{\epsilon_{r}\epsilon_{s}} \leq 1$ and using the Fact \ref{fact 2} we have
\\
$\lim_{n\rightarrow\infty}\varphi_{n} 
({\mathit{A}}_{n}^{\epsilon_{1}} {\mathit{A}}_{n}^{\epsilon_{2}} \cdots {\mathit{A}}_{n}^{\epsilon_{2k}})$
\begin{align*}
    &=\displaystyle\sum_{\mathit{\pi}\in\mathcal{P}_{2}(2k)}\lim_{n\rightarrow\infty}\frac{1}{n^{k+1}}\displaystyle\sum_{\mathit{I}_{2k}(\pi)}\displaystyle\prod_{(r,s)\in\pi}d_{i_{r}i_{r+1}}^{\epsilon_{r}}d_{i_{s}i_{s+1}}^{\epsilon_{s}}(1-\delta_{\epsilon_{r}\epsilon_{s}})b'(r,s)
 \\
 &=\displaystyle\sum_{\mathit{\pi}\in\mathcal{P}_{2}(2k)}\prod_{(r,s)\in\pi}(1-\delta_{\epsilon_{r}\epsilon_{s}})\lim_{n\rightarrow\infty}\frac{1}{n^{k+1}}\displaystyle\sum_{\mathit{I}_{2k}(\pi)}\displaystyle\prod_{(r,s)\in\pi}d_{i_{r}i_{r+1}}^{\epsilon_{r}}d_{i_{s}i_{s+1}}^{\epsilon_{s}}\delta_{i_r i_{s+1}} \delta_{i_s i_{r+1}}
 \\
 &=\displaystyle\sum_{\mathit{\pi}\in\mathcal{P}_{2}(2k)}\prod_{(r,s)\in\pi}(1-\delta_{\epsilon_{r}\epsilon_{s}})\lim_{n\rightarrow\infty}\frac{1}{n^{k+1}}\displaystyle\sum_{\mathit{I}_{2k}(\pi)}\displaystyle\prod_{r=1}^{2k}d_{i_{r}i_{r+1}}^{\epsilon_{r}}\delta_{i_r i_{\gamma \pi (r)}}.
\end{align*}
Observe that $d_{i_{r}i_{r+1}}^{\epsilon_{r}}\delta_{i_r i_{\gamma \pi (r)}} \leq 1$ and from Fact \ref{fact 1} number of independent indices in $I_{2k}(\pi)$ is at most k+1 and this upper bound is achieved if and only if $\pi$ is non-crossing. Using this we get  
\\~\\
$\lim_{n\rightarrow\infty}\varphi_{n} 
({\mathit{A}}_{n}^{\epsilon_{1}} {\mathit{A}}_{n}^{\epsilon_{2}} \cdots {\mathit{A}}_{n}^{\epsilon_{2k}})$
\begin{align*}
&=\displaystyle\sum_{{\pi}\in\mathit{NC}_{2}(2k)}\prod_{(r,s)\in\pi}(1-\delta_{\epsilon_{r}\epsilon_{s}})\lim_{n\rightarrow\infty}\frac{1}{n^{k+1}}\displaystyle\sum_{\mathit{I}_{2k}(\pi)}\displaystyle\prod_{r=1}^{2k}d_{i_{r}i_{r+1}}^{\epsilon_{r}}\delta_{i_r i_{\gamma \pi (r)}}.
\end{align*}
Let $\mathit{I}_{2k}'(\pi)=\{1\leq i_{1},i_{2}\dots,i_{2k} \leq n :  i_r= i_{\gamma \pi (r)} \text{ for }r=1,2,\dots,2k \}$. Then  
%\\~\\
%$ \lim_{n\rightarrow\infty}\varphi_{n} 
%    ({\mathit{A}}_{n}^{\epsilon_{1}} {\mathit{A}}_{n}^{\epsilon_{2}} \cdots {\mathit{A}}_{n}^{\epsilon_{2k}})$
\begin{align}
   \lim_{n\rightarrow\infty}\varphi_{n} 
   ({\mathit{A}}_{n}^{\epsilon_{1}}  \cdots {\mathit{A}}_{n}^{\epsilon_{2k}}) &= \displaystyle\sum_{\mathit{\pi}\in\mathit{NC}_{2}(2k)}\displaystyle\prod_{(r,s)\in\pi}(1-\delta_{\epsilon_{r}\epsilon_{s}})\lim_{n\rightarrow\infty}\frac{1}{n^{k+1}}\displaystyle\sum_{\mathit{I}_{2k}'(\pi)}\displaystyle\prod_{(r,s)\in\pi}d_{i_{r}i_{s}}^{\epsilon_{r}}d_{i_{s}i_{r}}^{\epsilon_{s}}
    \nonumber \\
    &= \displaystyle\sum_{\mathit{\pi}\in\mathit{NC}_{2}(2k)}\displaystyle\prod_{(r,s)\in\pi}(1-\delta_{\epsilon_{r}\epsilon_{s}})\lim_{n\rightarrow\infty}\frac{1}{n^{k+1}}\displaystyle\sum_{\mathit{I}_{2k}'(\pi)}\displaystyle\prod_{(r,s)\in\pi}d_{i_{r}i_{s}}^{\epsilon_{r}}d_{i_{r}i_{s}}^{\epsilon_{r}}
    \nonumber \\
    &=\displaystyle\sum_{\mathit{\pi}\in\mathit{NC}_{2}(2k)}\displaystyle\prod_{(r,s)\in\pi}(1-\delta_{\epsilon_{r}\epsilon_{s}})\lim_{n\rightarrow\infty}\frac{1}{n^{k+1}}\displaystyle\sum_{\mathit{I}_{2k}'(\pi)}\displaystyle\prod_{(r,s)\in\pi}d_{i_{r}i_{s}}^{\epsilon_{r}}.\label{eq1}
\end{align}
For $\pi = \{(r_{1},s_{1}),(r_{2},s_{2})\dots (r_{k},s_{k})\} \in\mathit{NC}_{2}(2k)$, we want to calculate the limit
$$\lim_{n\rightarrow\infty}\frac{1}{n^{k+1}}\sum_{\mathit{I}_{2k}'(\pi)}d_{i_{r_{1}}i_{s_{1}}}^{\epsilon_{r_{1}}}d_{i_{r_{2}}i_{s_{2}}}^{\epsilon_{r_{2}}}\cdots d_{i_{r_{k}}i_{s_{k}}}^{\epsilon_{r_{k}}},$$
if it exist. Recall that there are $k+1$ independent indices in $I_{2k}^{'}(\pi)$. Hence from Lemma \ref{lem 2}, for any $\epsilon > 0$ there exist $N_{\epsilon}\in \mathbb{N}$ such that
\begin{align*}
\sum_{\mathit{I}_{2k}'(\pi)}d_{i_{r_{1}}i_{s_{1}}}^{\epsilon_{r_{1}}}d_{i_{r_{2}}i_{s_{2}}}^{\epsilon_{r_{2}}}\cdots d_{i_{r_{k}}i_{s_{k}}}^{\epsilon_{r_{k}}} \geq (n-k\epsilon n)^{k+1},\text{ for all } n \geq N_\epsilon.
\end{align*}
Therefore we have 
$$\frac{1}{n^{k+1}}\sum_{\mathit{I}_{2k}'(\pi)}d_{i_{r_{1}}i_{s_{1}}}^{\epsilon_{r_{1}}}d_{i_{r_{2}}i_{s_{2}}}^{\epsilon_{r_{2}}}\cdots d_{i_{r_{k}}i_{s_{k}}}^{\epsilon_{r_{k}}} \geq (1-k\epsilon)^{k+1}, \text{ for all } n \geq N_\epsilon.$$
where k is fixed and $\epsilon > 0$ was arbitrary, which implies 
$$\liminf_{n}\frac{1}{n^{k+1}}\sum_{\mathit{I}_{2k}'(\pi)}d_{i_{r_{1}}i_{s_{1}}}^{\epsilon_{r_{1}}}d_{i_{r_{2}}i_{s_{2}}}^{\epsilon_{r_{2}}}\cdots d_{i_{r_{k}}i_{s_{k}}}^{\epsilon_{r_{k}}} \geq 1.$$
But we already know that,
$$\frac{1}{n^{k+1}}\sum_{\mathit{I}_{2k}'(\pi)}d_{i_{r_{1}}i_{s_{1}}}^{\epsilon_{r_{1}}}d_{i_{r_{2}}i_{s_{2}}}^{\epsilon_{r_{2}}}\cdots d_{i_{r_{k}}i_{s_{k}}}^{\epsilon_{r_{k}}} \leq 1.$$
Which implies
\begin{equation}\label{eq2}
	\lim_{n\rightarrow\infty}\frac{1}{n^{k+1}}\sum_{\mathit{I}_{2k}'(\pi)}d_{i_{r_{1}}i_{s_{1}}}^{\epsilon_{r_{1}}}d_{i_{r_{2}}i_{s_{2}}}^{\epsilon_{r_{2}}}\cdots d_{i_{r_{k}}i_{s_{k}}}^{\epsilon_{r_{k}}} = 1.
\end{equation}
Therefore from \eqref{eq1} and \eqref{eq2} we have
$$ \lim_{n\rightarrow\infty}\varphi_{n} 
 ({\mathit{A}}_{n}^{\epsilon_{1}} {\mathit{A}}_{n}^{\epsilon_{2}} \cdots {\mathit{A}}_{n}^{\epsilon_{2k}}) = \displaystyle\sum_{\mathit{\pi}\in\mathit{NC}_{2}(2k)}\displaystyle\prod_{(r,s)\in\pi}(1-\delta_{\epsilon_{r}\epsilon_{s}}).$$
 Which are the even moments of a circular variable, see Fact \ref{fact 4}.  Thus we get $A_n\stackrel{\ast}{\to}c$, as $n\to \infty.$ Hence, we have the first part.
 
 \vspace{.2cm}
\noindent \textit{Proof of the second part:}
 Forward direction holds trivially using the first part. For the converse, let $\tau_1,\tau_2,\dots,\tau_t \in [m]$, $\epsilon_1,\epsilon_2,\dots,\epsilon_t \in \{1,\ast\}$ and $Y_n^{(l)}=(y_{ij}^{(l)})=D_n^{(l)}\odot Y_n^{(l)}$ for $l=1,2,\dots,m$. Then 
 \begin{align*}
 	\varphi_n(A_n^{(\tau_1)\epsilon_1}\cdots A_n^{(\tau_t)\epsilon_t})&=\frac{1}{n^{\frac{t}{2}+1}}\sum_{I_t}\mathrm{E}[y_{i_1 i_2}^{(\tau_1)\epsilon_1}y_{i_2 i_3}^{(\tau_2)\epsilon_2}\cdots x_{i_t i_1}^{(\tau_t)\epsilon_t}].
 \end{align*}
 From Assumption \ref{assu 1}, the moments of all order are uniformly bounded for each random matrix and the number of matrices is finite, so we have the uniform bound for the moments of each order and random matrix. Using this and the same arguments used in the first part, for the odd $t$ we have
 $$
 \lim_{n \to \infty}\varphi_n(A_n^{(\tau_1)\epsilon_1}\cdots A_n^{(\tau_t)\epsilon_t})=0.
 $$  
 Now let $t=2k$, then again using the same arguments
 \begin{align*}
 	\varphi_n(A_n^{(\tau_1)\epsilon_1}\cdots A_n^{(\tau_t)\epsilon_t})&=\frac{1}{n^{k+1}}\displaystyle\sum_{\mathit{\pi}\in\mathcal{P}_{2}(2k)}\displaystyle\sum_{\mathit{I}_{2k}(\pi)}\displaystyle\prod_{(r,s)\in\pi}\mathrm{E}[y_{i_r i_{r+1}}^{(\tau_r)\epsilon_r}y_{i_s i_{s+1}}^{(\tau_s)\epsilon_s}] + o(1).
 \end{align*}
 We know that $X_n^{(1)},\ldots,X_n^{(m)}$ are independent and each satisfy Assumption \ref{assu 1}. Therefore we have
 \begin{align*}
 	\mathrm{E}[y_{i_r i_{r+1}}^{(\tau_r)\epsilon_r}y_{i_s i_{s+1}}^{(\tau_s)\epsilon_s}]&=d_{i_r i_{r+1}}^{(\tau_r)\epsilon_r}d_{i_s i_{s+1}}^{(\tau_s)\epsilon_s}\mathrm{E}[x_{i_r i_{r+1}}^{(\tau_r)\epsilon_r}x_{i_s i_{s+1}}^{(\tau_s)\epsilon_s}]
 	\\
 	&=d_{i_r i_{r+1}}^{(\tau_r)\epsilon_r}d_{i_s i_{s+1}}^{(\tau_s)\epsilon_s}(\delta_{\epsilon_{r} \epsilon_s}\delta_{i_r i_s}\delta_{i_{r+1} i_{s+1}} + (1-\delta_{\epsilon_{r} \epsilon_s})\delta_{i_r i_{s+1}}\delta_{i_s i_{r+1}} ) \delta_{\tau_r \tau_s}.
 \end{align*}
 Using Facts \ref{fact 1}, \ref{fact 2} and $|	\mathrm{E}[y_{i_r i_{r+1}}^{(\tau_r)\epsilon_r}y_{i_s i_{s+1}}^{(\tau_s)\epsilon_s}]|\leq 1$ we have
 \\
 $\lim_{n \to \infty }\varphi_n(A_n^{(\tau_1)\epsilon_1}\cdots A_n^{(\tau_{2k})\epsilon_{2k}})$
 \begin{align*}
 	&= \sum_{\pi \in NC_2(2k)} \lim_{n \to \infty}\frac{1}{n^{k+1}}\sum_{I_{2k}(\pi)} \prod_{(r,s) \in \pi} (1-\delta_{\epsilon_{r} \epsilon_s})\delta_{i_r i_{s+1}}\delta_{i_s i_{r+1}}d_{i_r i_{r+1}}^{(\tau_r)\epsilon_r}d_{i_s i_{s+1}}^{(\tau_s)\epsilon_s} \delta_{\tau_r \tau_s}
 	\\
 	&=\sum_{\pi \in NC_2(2k)}\prod_{(r,s) \in \pi} (1-\delta_{\epsilon_{r} \epsilon_s})\delta_{\tau_r \tau_s} \lim_{n \to \infty}\frac{1}{n^{k+1}}\sum_{I'_{2k}(\pi)}\prod_{(r,s) \in \pi} d_{i_r i_{r+1}}^{(\tau_r)\epsilon_r}d_{i_{s+1} i_{s}}^{(\tau_r)\epsilon_r}
 	\\
 	&= \sum_{\pi \in NC_2(2k)}\prod_{(r,s) \in \pi} (1-\delta_{\epsilon_{r} \epsilon_s})\delta_{\tau_r \tau_s} \lim_{n \to \infty}\frac{1}{n^{k+1}}\sum_{I'_{2k}(\pi)}\prod_{(r,s) \in \pi} d_{i_r i_{s}}^{(\tau_r)\epsilon_r},  	
 \end{align*}
 where $\mathit{I}_{2k}'(\pi)=\{1\leq i_{1},i_{2}\dots,i_{2k} \leq n :  i_r= i_{\gamma \pi (r)} \text{ for }r=1,2,\dots,2k \}$.  Then, as in the proof of the first part, from Lemma \ref{lem new} we get
 $$\liminf_n \frac{1}{n^{k+1}}\sum_{I'_{2k}(\pi)}\prod_{(r,s) \in \pi} d_{i_r i_{s}}^{(\tau_r)\epsilon_r} \geq 1.$$
 Therefore we have
 \begin{equation}\label{1}
 	\lim_{n \to \infty }\varphi_n(A_n^{(\tau_1)\epsilon_1}\cdots A_n^{(\tau_{2k})\epsilon_{2k}})=\sum_{\pi \in NC_2(2k)}\prod_{(r,s) \in \pi} (1-\delta_{\epsilon_{r} \epsilon_s})\delta_{\tau_r \tau_s}.
 \end{equation}
  Now let $c_1,\ldots,c_m$ are free circular variables. Then from Fact \ref{fact 4} we have
 % \begin{align*}
  %	\varphi(c_{\tau_1}^{\epsilon_1}\cdots c_{\tau_{2k}}^{\epsilon_{2k}})&= \sum_{\pi \in \mathit{NC}(2k)}\prod_{V\in\pi}\kappa(V)[c_{\tau_1}^{\epsilon_1},\ldots ,c_{\tau_{2k}}^{\epsilon_{2k}}].
 % \end{align*}
 % \sloppy Using the freeness and definition of circular variable, we have $\kappa_2(c_i,c_i^{\ast})=\kappa_2(c_i^{\ast},c_i)=1$ and all other free cumulants are zero. Therefore we have 
  \begin{equation}\label{2}
  	\varphi(c_{\tau_1}^{\epsilon_1}\cdots c_{\tau_{2k}}^{\epsilon_{2k}})=\sum_{\pi \in NC_2(2k)}\prod_{(r,s) \in \pi} (1-\delta_{\epsilon_{r} \epsilon_s})\delta_{\tau_r \tau_s}.
  \end{equation} 
  Therefore, from \eqref{1} and \eqref{2} we have the result.
 \end{proof} 

\begin{proof}[Proof of Theorem \ref{thm2}]
    Forward direction and the converse in the case of odd moments follows as in the proof of Theorem \ref{thm1}. Note that 
   \begin{align*}
   |\mathrm{E}[y_{i_{1}i_{2}}^{\epsilon_{1}}\cdots y_{i_{2k}i_{1}}^{\epsilon_{2k}}] |&=|d_{i_{1}i_{2}}^{\epsilon_{1}}\cdots d_{i_{2k}i_{1}}^{\epsilon_{2k}}\mathrm{E}[x_{i_{1}i_{2}}^{\epsilon_{1}}\cdots x_{i_{2k}i_{1}}^{\epsilon_{2k}}]| 
   \\
   &\leq |\mathrm{E}[x_{i_{1}i_{2}}^{\epsilon_{1}}\cdots x_{i_{2k}i_{1}}^{\epsilon_{2k}}]|. 
   \end{align*} 
    This fact and the arguments as in the proof of Theorem 1 in \cite{adhikari2019brown} imply that
  \begin{align*}
  	\varphi_{n} 
 ({\mathit{A}}_{n}^{\epsilon_{1}}  \cdots 
  {\mathit{A}}_{n}^{\epsilon_{2k}})
  &= \frac{1}{n^{k+1}}\displaystyle\sum_{\mathit{I}_{2k}}\mathrm{E}[y_{i_{1}i_{2}}^{\epsilon_{1}}y_{i_{2}i_{3}}^{\epsilon_{2}}\cdots y_{i_{2k}i_{1}}^{\epsilon_{2k}}] 
  \\
  &= \frac{1}{n^{k+1}}\displaystyle\sum_{\mathit{\pi}\in\mathcal{P}_{2}(2k)}\displaystyle\sum_{\mathit{I}_{2k}(\pi)}\displaystyle\prod_{(r,s)\in\pi}\mathrm{E}[y_{i_{r}i_{r+1}}^{\epsilon_{r}}y_{i_{s}i_{s+1}}^{\epsilon_{s}}] + o(1).
\end{align*}
Using the properties from Assumption \ref{assu 1} we have
\\
$\mathrm{E}[y_{i_{r}i_{r+1}}^{\epsilon_{r}}y_{i_{s}i_{s+1}}^{\epsilon_{s}}]$
\begin{align*}
         &=  d_{i_{r}i_{r+1}}^{\epsilon_{r}}d_{i_{s}i_{s+1}}^{\epsilon_{s}}\mathrm{E}[x_{i_{r}i_{r+1}}^{\epsilon_{r}}x_{i_{s}i_{s+1}}^{\epsilon_{s}}]
        \\
        &= d_{i_{r}i_{r+1}}^{\epsilon_{r}}d_{i_{s}i_{s+1}}^{\epsilon_{s}}\{\delta_{\epsilon_{r}\epsilon_{s}}(a'(r,s)+\rho b'(r,s))+(1-\delta_{\epsilon_{r}\epsilon_{s}})(b'(r,s)+\rho a'(r,s))\}
        \\
        &= d_{i_{r}i_{r+1}}^{\epsilon_{r}}d_{i_{s}i_{s+1}}^{\epsilon_{s}}\{[(1-\delta_{\epsilon_{r}\epsilon_{s}})+\rho\delta_{\epsilon_{r}\epsilon_{s}}]b'(r,s)+(\delta_{\epsilon_{r}\epsilon_{s}}+\rho(1-\delta_{\epsilon_{r}\epsilon_{s}}))a'(r,s)\}.
   \end{align*}
Note that $d_{i_{r}i_{r+1}}^{\epsilon_{r}}d_{i_{s}i_{s+1}}^{\epsilon_{s}}(\delta_{\epsilon_{r}\epsilon_{s}}+\rho(1-\delta_{\epsilon_{r}\epsilon_{s}})) \leq 1$. Therefore, using the Fact \ref{fact 2}, 
\\
$\displaystyle\lim_{n\rightarrow\infty}\varphi_{n} 
({\mathit{A}}_{n}^{\epsilon_{1}}  \cdots {\mathit{A}}_{n}^{\epsilon_{2k}})$
\begin{align*}
     &=\displaystyle\sum_{\mathit{\pi}\in\mathcal{P}_{2}(2k)}\lim_{n\rightarrow\infty}\frac{1}{n^{k+1}}\displaystyle\sum_{\mathit{I}_{2k}(\pi)}\displaystyle\prod_{(r,s)\in\pi}d_{i_{r}i_{r+1}}^{\epsilon_{r}}d_{i_{s}i_{s+1}}^{\epsilon_{s}}[(1-\delta_{\epsilon_{r}\epsilon_{s}})+\rho\delta_{\epsilon_{r}\epsilon_{s}}]b'(r,s)
    \\
    &= \displaystyle\sum_{\mathit{\pi}\in\mathcal{P}_{2}(2k)}\displaystyle\prod_{(r,s)\in\pi}[(1-\delta_{\epsilon_{r}\epsilon_{s}})+\rho\delta_{\epsilon_{r}\epsilon_{s}}]\lim_{n\rightarrow\infty}\frac{1}{n^{k+1}}\displaystyle\sum_{\mathit{I}_{2k}(\pi)}\displaystyle\prod_{r=1}^{2k}d_{i_{r}i_{r+1}}^{\epsilon_{r}}\delta_{i_{r}i_{\gamma \pi (r)}}.
\end{align*}
Using Fact \ref{fact 1} and $d_{i_{r}i_{r+1}}^{\epsilon_{r}}\delta_{i_{r}i_{\gamma \pi (r)}} \leq 1$ we have  
%\\~\\
%$ \lim_{n\rightarrow\infty}\varphi_{n} 
%    ({\mathit{A}}_{n}^{\epsilon_{1}}  \cdots {\mathit{A}}_{n}^{\epsilon_{2k}})$
\\
$\displaystyle\lim_{n\rightarrow\infty}\varphi_{n} 
({\mathit{A}}_{n}^{\epsilon_{1}}  \cdots {\mathit{A}}_{n}^{\epsilon_{2k}})$
\begin{align*}
      &= \displaystyle\sum_{\mathit{\pi}\in\mathit{NC}_{2}(2k)}\displaystyle\prod_{(r,s)\in\pi}[(1-\delta_{\epsilon_{r}\epsilon_{s}})+\rho\delta_{\epsilon_{r}\epsilon_{s}}]\lim_{n\rightarrow\infty}\frac{1}{n^{k+1}}\displaystyle\sum_{\mathit{I}_{2k}(\pi)}\displaystyle\prod_{r=1}^{2k}d_{i_{r}i_{r+1}}^{\epsilon_{r}}\delta_{i_{r}i_{\gamma \pi (r)}}
     \\
     &=\displaystyle\sum_{\mathit{\pi}\in\mathit{NC}_{2}(2k)}\displaystyle\prod_{(r,s)\in\pi}[(1-\delta_{\epsilon_{r}\epsilon_{s}})+\rho\delta_{\epsilon_{r}\epsilon_{s}}]\lim_{n\rightarrow\infty}\frac{1}{n^{k+1}}\displaystyle\sum_{\mathit{I}_{2k}'(\pi)}\displaystyle\prod_{r=1}^{2k}d_{i_{r}i_{r+1}}^{\epsilon_{r}},
\end{align*}
where, $\mathit{I}_{2k}'(\pi)=\{ i_{1},i_{2}\dots,i_{2k} \in I_{2k}(\pi) :  i_{r}=i_{\gamma \pi (r)} \text{ for } r=1,2,\dots,k \}$. Now $D_n$ is a symmetric matrix so we can remove the dependency of $d_{ij}$ from $\epsilon$ and interchange the indices. Therefore, we have 
%\\~\\
%$ \lim_{n\rightarrow\infty}\varphi_{n} 
%    ({\mathit{A}}_{n}^{\epsilon_{1}} {\mathit{A}}_{n}^{\epsilon_{2}} \cdots {\mathit{A}}_{n}^{\epsilon_{2k}})$
\\
$\displaystyle\lim_{n\rightarrow\infty}\varphi_{n} 
({\mathit{A}}_{n}^{\epsilon_{1}}  \cdots {\mathit{A}}_{n}^{\epsilon_{2k}})$
    \begin{align*}
          &=\displaystyle\sum_{\mathit{\pi}\in\mathit{NC}_{2}(2k)}\displaystyle\prod_{(r,s)\in\pi}[(1-\delta_{\epsilon_{r}\epsilon_{s}})+\rho\delta_{\epsilon_{r}\epsilon_{s}}]\lim_{n\rightarrow\infty}\frac{1}{n^{k+1}}\displaystyle\sum_{\mathit{I}_{2k}'(\pi)}\displaystyle\prod_{(r,s)\in\pi}d_{i_{r}i_{r+1}}d_{i_{s}i_{s+1}}
         \\
         &=\displaystyle\sum_{\mathit{\pi}\in\mathit{NC}_{2}(2k)}\displaystyle\prod_{(r,s)\in\pi}[(1-\delta_{\epsilon_{r}\epsilon_{s}})+\rho\delta_{\epsilon_{r}\epsilon_{s}}]\lim_{n\rightarrow\infty}\frac{1}{n^{k+1}}\displaystyle\sum_{\mathit{I}_{2k}'(\pi)}\displaystyle\prod_{(r,s)\in\pi}d_{i_{r}i_{s}}d_{i_{s+1}i_{s}}
         \\
         &=\displaystyle\sum_{\mathit{\pi}\in\mathit{NC}_{2}(2k)}\displaystyle\prod_{(r,s)\in\pi}[(1-\delta_{\epsilon_{r}\epsilon_{s}})+\rho\delta_{\epsilon_{r}\epsilon_{s}}]\lim_{n\rightarrow\infty}\frac{1}{n^{k+1}}\displaystyle\sum_{\mathit{I}_{2k}'(\pi)}\displaystyle\prod_{(r,s)\in\pi}d_{i_{r}i_{s}}.
\end{align*}
Now from the proof of Theorem \ref{thm1} we have
$$\lim_{n\rightarrow\infty}\frac{1}{n^{k+1}}\displaystyle\sum_{\mathit{I}_{2k}'(\pi)}\displaystyle\prod_{(r,s)\in\pi}d_{i_{r}i_{s}}=1.$$
Therefore, we have
\\
$$ \lim_{n\rightarrow\infty}\varphi_{n} 
    ({\mathit{A}}_{n}^{\epsilon_{1}}  \cdots {\mathit{A}}_{n}^{\epsilon_{2k}})=\displaystyle\sum_{\mathit{\pi}\in\mathit{NC}_{2}(2k)}\prod_{(r,s)\in\pi}[(1-\delta_{\epsilon_{r}\epsilon_{s}})+\rho\delta_{\epsilon_{r}\epsilon_{s}}]= \varphi(e^{\epsilon_{1}}\cdots e^{\epsilon_{2k}}),$$
 Hence using the Fact \ref{fact 4}, we have the result. The second part can be established as in Theorem \ref{thm1}, using the arguments given above. We skip the details.
\end{proof}

\begin{proof}[Proof of Lemma \ref{lem 1}] Suppose that the result is not true, then there exist $\epsilon_{o} > 0$ and a subsequence $n_{k}$ of natural numbers such that $|A_{\epsilon_{o},n_{k}}|<n_{k}-\epsilon_{o}n_{k}$,$\text{ for all } k \geq 1$. Then we have
 \begin{align*}
     \displaystyle\sum_{i,j=1}^{n_{k}}d_{ij} &= \displaystyle\sum_{i\in A_{\epsilon_{o},n_{k}}}\displaystyle\sum_{j=1}^{n}d_{ij} + \displaystyle\sum_{i\in A_{\epsilon_{o},n_{k}}^{c}}\displaystyle\sum_{j=1}^{n}d_{ij}
     \\
     &< n_{k}|A_{\epsilon_{o},n_{k}}| + (n_{k}-\epsilon_{o}n_{k})|A_{\epsilon_{o},n_{k}}^{c}|
     \\
     &= n_{k}|A_{\epsilon_{o},n_{k}}| + n_{k}|A_{\epsilon_{o},n_{k}}^{c}| - \epsilon_{o}n_{k}|A_{\epsilon_{o},n_{k}}^{c}|
     \\
     &= n_{k}^2 - \epsilon_{o}n_{k}(n_{k}-|A_{\epsilon_{o},n_{k}}|)
     \\
     &< n_{k}^2 - \epsilon_{o}n_{k}^2 + \epsilon_{o}n_{k}(n_{k}-\epsilon_{o}n_{k})
     \\
     &= n_{k}^2(1-\epsilon_{o}^2).
 \end{align*}
 This  implies that $\frac{1}{n_{k}^2} \sum_{i,j=1}^{n_{k}}d_{ij} < 1-\epsilon_{o}^2$, for all $k\geq 1$, which is a contradiction to the fact  $\frac{1}{n^{2}}\sum_{i,j=1}^{n}d_{ij}$ converges to $1$, as $n\to \infty$. Hence the result.
 \end{proof}

\begin{proof}[Proof of Lemma \ref{lem 2}]
        From Lemma \ref{lem 1} for any $\epsilon > 0$ there exist $N_\epsilon \in \mathbb{N}$ such that 
 \[
 |A_{\epsilon,n}|\geq n-\epsilon n \mbox{ and $|B_{\epsilon,n}|\geq n-\epsilon n$,$\text{ for all } n \geq N_\epsilon$.}
 \]
 Now consider, $A_{\epsilon,n}^{(1)}=\{i\in A_{\epsilon,n}: \sum_{j\in B_{\epsilon,n}}d_{ij}\geq n-2\epsilon n \}$ . Note that, for $ i \in A_{\epsilon,n}$ and $n \geq N_\epsilon$, we have
\begin{align*}
     \displaystyle\sum_{j \in B_{\epsilon,n}}d_{ij} &= \displaystyle\sum_{j=1}^{n}d_{ij}-\displaystyle\sum_{j \in B_{\epsilon,n}^{c}}d_{ij} 
      \geq n -\epsilon n - \epsilon n
     = n- 2 \epsilon n.
\end{align*}
Which implies $A_{\epsilon,n} \subseteq A_{\epsilon,n}^{(1)}$. Therefore
$$|A_{\epsilon,n}^{(1)}| \geq n-\epsilon n , \text{ for all } n \geq N_\epsilon.$$
Similarly, for $B_{\epsilon,n}^{(1)}=\{j\in B_{\epsilon,n}: \sum_{i\in A_{\epsilon,n}}d_{ij}\geq n-2\epsilon n \}$, by the same calculation we get $|B_{\epsilon,n}^{(1)}| \geq n- \epsilon n$,$\text{ for all } n \geq N_\epsilon$. For $l=2,3,\dots,k$ define
$$A_{\epsilon,n}^{(l)}=\{i\in A_{\epsilon,n}^{(l-1)}: \sum_{j\in B_{\epsilon,n}^{(l-1)}}d_{ij}\geq n-(l+1)\epsilon n \},$$  $$B_{\epsilon,n}^{(l)}=\{j\in B_{\epsilon,n}^{(l-1)}: \sum_{i\in A_{\epsilon,n}^{(l-1)}}d_{ij}\geq n-(l+1)\epsilon n \}.$$
Using the same argument as above, we have $|A_{\epsilon,n}^{(l)}| \geq n-\epsilon n$ and $|B_{\epsilon,n}^{(l)}| \geq n-\epsilon n$,$\text{ for all } n \geq N_\epsilon$. Then we want to calculate the following sum,
$$\displaystyle\sum_{I_{2k}}d_{i_{1}i_{2}}d_{i_{3}i_{4}}\cdots d_{i_{2k-1}i_{2k}}. $$
Observe that in the product there are $k$ many terms of the form $d_{i_{l}i_{l+1}}$ and the number of independent indices is $k+1$. Therefore, there exists at least one independent index that appears only in one of the $d_{i_{l}i_{l+1}}$ (after reducing to independent indices). Without loss of generality, let us assume that term is $d_{i_{2k-1}i_{2k}}$ and $i_{2k}$ is that index. Now we can break the sum by,
 
 \begin{align*}
   \displaystyle\sum_{I_{2k}}d_{i_{1}i_{2}}d_{i_{3}i_{4}}\cdots d_{i_{2k-1}i_{2k}} &= \displaystyle\sum_{i_{1},\dots , i_{2k-1} \in I_{2k}}d_{i_{1}i_{2}}d_{i_{3}i_{4}}\cdots d_{i_{2k-3}i_{2k-2}}\displaystyle\sum_{i_{2k}=1}^{n}d_{i_{2k-1}i_{2k}}
   \\
   & \geq (n-\epsilon n)\displaystyle\sum_{I_{2k}^{(1)}}d_{i_{1}i_{2}}d_{i_{3}i_{4}}\cdots d_{i_{2k-3}i_{2k-2}},
 \end{align*}
 where $I_{2k}^{(1)}=\{i_{1},i_{2},\dots,i_{2k-1} \in I_{2k} : i_{2k-1} \in A_{\epsilon,n} \}$. If $i_{2k-1}$ was that independent index, then take the sum over $i_{2k-1}$ and put the restriction that $i_{2k} \in B_{\epsilon,n}$. After this step, we are left with \(k\) independent indices in $I_{2k}^{(1)}$ and the number of terms in the product is \(k-1\). So again there will be one independent index which is only in one of the $d_{i_{l}i_{l+1}}$. Then restrict the corresponding index on $A_{\epsilon,n}^{(1)}$ or $B_{\epsilon,n}^{(1)}$ according to the position of the index. Let $I_{2k}^{(2)}$ be that restricted index set then we get the inequality,
$$\displaystyle\sum_{I_{2k}}d_{i_{1}i_{2}}d_{i_{3}i_{4}}\cdots d_{i_{2k-1}i_{2k}} \geq (n-\epsilon n)(n-2\epsilon n)\displaystyle\sum_{I_{2k}^{(2)}}d_{i_{l_{1}}i_{l_{2}}}d_{i_{l_{3}}i_{l_{4}}}\cdots d_{i_{l_{2k-5}}i_{l_{2k-4}}}.$$
Now, after repeating this process $k$ times and using the fact that $|A_{\epsilon,n}^{(l)}| \geq n-\epsilon n$ and $|B_{\epsilon,n}^{(l)}| \geq n-\epsilon n$,$\text{ for all } n \geq N_\epsilon$ and $l=2,3,\dots,k$, we get the following inequality, 
\begin{align*}
\displaystyle\sum_{I_{2k}}d_{i_{1}i_{2}}d_{i_{3}i_{4}}\cdots d_{i_{2k-1}i_{2k}} &\geq (n-\epsilon n)(n-2\epsilon n)\cdots (n-k\epsilon n) (n-\epsilon n)
\\
& \geq (n-k\epsilon n)^{k+1}.
\end{align*}
Hence the result.
\end{proof}

\begin{proof}[Proof of Lemma \ref{lem new}]
	 For $\epsilon>0$, apply Lemma \ref{lem 1} for each $D_{n}^{(1)},\dots,D_n^{(m)}$ and take the maximum of all stages. After that choose a one term from the product as done in the proof of Lemma \ref{lem 2} and restrict the corresponding index. Repeat the process $k$ times. Note that, in this case, the number of terms may be reduced by $\epsilon n$ at each step. However, we obtain a similar result with slightly different lower bound. We skip the details to avoid notational complexity.
	 %Then for $r=1,2,\dots,m$ and $l=1,2,\dots,k$ define,
%	\begin{align*}
%		A_{\epsilon,n}^{(\tau_r)(0)}&=\{1\leq i \leq n : \frac{1}{n}\sum_{j=1}^{n}d_{ij} \geq 1-\epsilon \}, 
%		\\
%		B_{\epsilon,n}^{(\tau_r)(0)}&=\{1\leq j \leq n : \frac{1}{n}\sum_{i=1}^{n}d_{ij} \geq 1-\epsilon \},
%		\\
%		A_{\epsilon,n}^{(\tau_r)(l)}&=\{i\in A_{\epsilon,n}^{(\tau_r)(l-1)}: \frac{1}{n}\sum_{j\in B_{\epsilon,n}^{(\tau_r)(l-1)}}d_{ij}\geq 1-(l+1)\epsilon  \},
%		\\
%		B_{\epsilon,n}^{(\tau_r)(l)}&=\{j\in B_{\epsilon,n}^{(\tau_r)(l-1)}: \frac{1}{n}\sum_{i\in A_{\epsilon,n}^{(\tau_r)(l-1)}}d_{ij}\geq 1-(l+1)\epsilon  \}.
%	\end{align*}
%	Then using above and the same arguments as in the proof of Lemma \ref{lem 2}, we conclude the result. 
\end{proof}

\section{Proof of Theorem \ref{thm3}}\label{sec5}
To prove the next result, we first state two lemmas and a key fact that will be used in the proof. We provide the proofs of the lemmas at the end of this section.

\begin{lemma}\label{lem 3}
    Let $D_{p \times n}=(d_{ij})_{p \times n}$ be a sequence of deterministic matrices whose entries satisfy Assumption \ref{assu 2} and  $\frac{p}{n}\xrightarrow{}y>0$ as $p\xrightarrow{}\infty$. Define 
$$ A_{\epsilon,n,p}=\{1\leq i \leq p : \frac{1}{n}\sum_{j=1}^n d_{ij}\geq 1-\epsilon \} \text{ and }B_{\epsilon,n,p}=\{1\leq j \leq n : \frac{1}{p}\sum_{i=1}^p d_{ij}\geq 1-\epsilon  \}.$$
Then for any $\epsilon>0$ there exists $N_\epsilon \in \mathbb{N}$ such that 
\[
|A_{\epsilon,n,p}|\geq p -\epsilon p \mbox{ and } |B_{\epsilon,n,p}|\geq n -\epsilon n, \text{ for all } n,p \geq N_{\epsilon}.
\]
\end{lemma}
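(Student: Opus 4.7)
The plan is to mimic the contradiction argument used in the proof of Lemma \ref{lem 1}, with the necessary modifications to handle the rectangular setting. Concretely, I would fix $\epsilon_0 > 0$ and suppose, toward a contradiction, that the first inequality $|A_{\epsilon_0, n, p}| \geq p - \epsilon_0 p$ fails along some subsequence; that is, there exist sequences $n_k, p_k \to \infty$ with $p_k/n_k \to y$ such that $|A_{\epsilon_0, n_k, p_k}^c| > \epsilon_0 p_k$ for every $k$. The assumption $p/n \to y > 0$ guarantees $n, p \to \infty$ jointly, so the hypothesis $\frac{1}{np}\sum_{i,j} d_{ij} \to 1$ from Assumption \ref{assu 2} is available along this subsequence.

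Next, I would split the double sum by rows:
\[
\sum_{i=1}^{p_k} \sum_{j=1}^{n_k} d_{ij} \;=\; \sum_{i \in A_{\epsilon_0, n_k, p_k}} \sum_{j=1}^{n_k} d_{ij} \;+\; \sum_{i \in A_{\epsilon_0, n_k, p_k}^c} \sum_{j=1}^{n_k} d_{ij}.
\]
Each row on the ``good'' set contributes at most $n_k$, while on the ``bad'' set each row contributes strictly less than $n_k(1-\epsilon_0)$ by definition of $A_{\epsilon_0, n_k, p_k}^c$. Combining these with $|A_{\epsilon_0, n_k, p_k}^c| > \epsilon_0 p_k$ yields
\[
\sum_{i,j} d_{ij} \;<\; n_k |A_{\epsilon_0, n_k, p_k}| + n_k(1-\epsilon_0)|A_{\epsilon_0, n_k, p_k}^c| \;=\; n_k p_k - \epsilon_0 n_k |A_{\epsilon_0, n_k, p_k}^c| \;<\; n_k p_k (1 - \epsilon_0^{\,2}).
\]
Dividing by $n_k p_k$ contradicts $\frac{1}{n_k p_k}\sum_{i,j} d_{ij} \to 1$, proving the bound on $|A_{\epsilon, n, p}|$.

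The bound on $|B_{\epsilon, n, p}|$ is symmetric: one repeats the argument splitting by columns, bounding column sums by $p_k$ on the ``good'' set and by $p_k(1-\epsilon_0)$ on the ``bad'' set. No genuine obstacle is anticipated here; the argument is a direct rectangular transcription of Lemma \ref{lem 1}. The only point that requires slight care is ensuring that both $n_k$ and $p_k$ tend to infinity along the extracted subsequence, which is automatic from $p/n \to y > 0$, so one can pick a common threshold $N_\epsilon$ valid for both $n$ and $p$.
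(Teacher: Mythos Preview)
Your proposal is correct and follows essentially the same contradiction argument as the paper: assume the bound on $|A_{\epsilon_0,n_k,p_k}|$ fails along a subsequence, split the double sum over good and bad rows, and deduce $\frac{1}{n_kp_k}\sum_{i,j}d_{ij}<1-\epsilon_0^{2}$, contradicting Assumption~\ref{assu 2}. The algebraic bookkeeping is even slightly cleaner than in the paper, and the symmetric column argument for $|B_{\epsilon,n,p}|$ is exactly what the paper does as well.
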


\begin{lemma}\label{lem 4}
     Let $D_{p \times n}=(d_{ij})_{p \times n}$ be a sequence of deterministic matrices whose entries satisfy Assumption \ref{assu 2} and  $\frac{p}{n}\xrightarrow{}y>0$ as $p\xrightarrow{}\infty$. Suppose
      $I_{2k}=\{i_{r_1},i_{r_2},\dots \\,i_{r_{k}}, i_{s_1},i_{s_2},\dots,i_{s_k}: 1\leq i_{r_t} \leq p, 1 \leq i_{s_{t}} \leq n \text{ for } t=1,2,\dots,k \}$ be a set of $2k$ indices with exactly $k+1$ many independent indices. Then for any $\epsilon >0$ there exists $N_{\epsilon}\in \mathbb{N}$ such that
$$\sum_{I_{2k}}d_{i_{r_1}i_{s_1}}d_{i_{r_2}i_{s_2}}\cdots d_{i_{r_k}i_{s_k}} \geq p^u n^{k+1-u}(1-k\epsilon)^{k+1},\text{ for all } n,p \geq N_\epsilon,$$
where $u$ is the number of independent indices from $\{i_{r_1},i_{r_2},\dots,i_{r_{k}}\}$.
\end{lemma}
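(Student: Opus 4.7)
The plan is to adapt the peeling argument from the proof of Lemma \ref{lem 2} to the rectangular setting, where row indices range over $[p]$ and column indices range over $[n]$. The key new feature compared with the square case is that every peel now contributes either a factor of roughly $p$ (if a row-type independent index is eliminated) or a factor of roughly $n$ (if a column-type one is), and the product of these contributions matches the $p^u n^{k+1-u}$ appearing in the bound.

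First I would use Lemma \ref{lem 3} to obtain the base sets $A_{\epsilon,n,p}$ and $B_{\epsilon,n,p}$ of sizes at least $p-\epsilon p$ and $n-\epsilon n$ respectively, and then define refined nested subsets iteratively by
\begin{align*}
A_{\epsilon,n,p}^{(l)} &= \Big\{i \in A_{\epsilon,n,p}^{(l-1)} : \sum_{j \in B_{\epsilon,n,p}^{(l-1)}} d_{ij} \geq n - (l+1)\epsilon n\Big\}, \\
B_{\epsilon,n,p}^{(l)} &= \Big\{j \in B_{\epsilon,n,p}^{(l-1)} : \sum_{i \in A_{\epsilon,n,p}^{(l-1)}} d_{ij} \geq p - (l+1)\epsilon p\Big\},
\end{align*}
for $l=1,2,\dots,k$. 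Exactly as in the proof of Lemma \ref{lem 2}, a double-counting argument shows inductively that $|A_{\epsilon,n,p}^{(l)}| \geq p-\epsilon p$ and $|B_{\epsilon,n,p}^{(l)}| \geq n-\epsilon n$ for all $n,p \geq N_\epsilon$.

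Next I would peel off the $k+1$ independent indices one at a time. The combinatorial input is unchanged from the square case: since the $k$ factors $d_{i_{r_t}i_{s_t}}$ contain only $2k$ index-slots while there are $k+1$ distinct independent indices, pigeonhole produces at each intermediate stage at least one remaining independent index that appears in exactly one remaining factor. Summing over this lone index first yields a contribution of $\sum_{i_{r_t}} d_{i_{r_t}i_{s_t}} \geq p-(l+1)\epsilon p$ when it is row-type and its partner column-index has already been restricted to $B_{\epsilon,n,p}^{(l)}$, and the symmetric bound $n-(l+1)\epsilon n$ when it is column-type with partner in $A_{\epsilon,n,p}^{(l)}$. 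This step simultaneously eliminates one factor and places the partner index into the appropriate refined set, so the pigeonhole step can be reapplied to the remaining $k-1$ factors over $k$ remaining independent indices. After $k$ iterations all factors are gone and exactly one independent index remains; summing over it freely contributes one more factor of $p$ or $n$.

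Accumulating the contributions shows that, independently of the order in which indices are peeled, the $u$ row-type independent indices collectively produce $p^u$ times a product of factors of the form $(1-c\epsilon)$, while the $k+1-u$ column-type ones collectively produce $n^{k+1-u}$ times a similar product; each of the $k+1$ multiplicative factors is bounded below by $(1-k\epsilon)$, giving the claimed lower bound $p^u n^{k+1-u}(1-k\epsilon)^{k+1}$. The main technical obstacle is the bookkeeping, exactly as flagged in the proof of Lemma \ref{lem new}: unlike Lemma \ref{lem 2}, each peeling step branches into a row case and a column case, and one must verify that the restriction to $A_{\epsilon,n,p}^{(l)}$ or $B_{\epsilon,n,p}^{(l)}$ is invoked consistently with the type of the peeled index, at the correct level $l$. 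Once this case-analysis is made systematic the remainder of the argument is a direct transcription of the square case.
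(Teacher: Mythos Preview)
Your proposal is correct and follows essentially the same approach as the paper: the paper likewise defines the nested sets $A_{\epsilon,n,p}^{(l)}$ and $B_{\epsilon,n,p}^{(l)}$ exactly as you wrote them, verifies their sizes by the complement estimate, and then peels off one independent index at a time via the same pigeonhole argument, splitting into the row case (contributing a factor $\geq p-\epsilon p$) and the column case (contributing $\geq n-\epsilon n$) before collecting the $k+1$ factors into $p^u n^{k+1-u}(1-k\epsilon)^{k+1}$. The only cosmetic difference is that the paper writes out the two cases (i) and (ii) explicitly for the first peel rather than describing the branching abstractly.
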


\begin{fact}(Theorem 5.5 of \cite{bose2010patterned})\label{fact 3}
    Suppose $Z_{p \times n}$ is a $p \times n$ rectangular matrix whose entries are i.i.d. $\mathit{N}(0,1)$ and 
    $\frac{p}{n} \to y > 0$ as $p \to \infty$. Then the ESD of $\overline{Z}_p = \frac{1}{n} Z_{p \times n} Z^*_{n \times p}$ 
    converges to the Marčenko–Pastur law.
\end{fact}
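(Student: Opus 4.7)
The plan is to prove Fact~\ref{fact 3} by the classical method of moments, exploiting the Gaussian structure via Wick's theorem. I will first verify that $\lim_{p\to\infty}\mathrm{E}[\operatorname{tr}(\overline{Z}_p^k)]$ equals the $k$-th moment of $\mathrm{MP}_y$ for every $k\ge 1$, and then upgrade expected-moment convergence to almost-sure convergence of the ESD via a variance bound combined with the Carleman moment-determinacy of $\mathrm{MP}_y$ (which has compact support).

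Step one is the standard cyclic trace expansion
\[
\mathrm{E}[\operatorname{tr}(\overline{Z}_p^k)] = \frac{1}{p\,n^k}\sum_{\substack{i_1,\ldots,i_k\in[p]\\ j_1,\ldots,j_k\in[n]}} \mathrm{E}\!\left[\, z_{i_1 j_1}\, z_{i_2 j_1}\, z_{i_2 j_2}\, z_{i_3 j_2}\cdots z_{i_k j_k}\, z_{i_1 j_k}\,\right],
\]
in which one reads the $2k$ factors as the consecutive edges of a closed bipartite walk of length $2k$ alternating between ``$i$-vertices'' $i_1,\ldots,i_k \in [p]$ and ``$j$-vertices'' $j_1,\ldots,j_k\in[n]$. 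Wick's theorem applied to i.i.d.\ $\mathit{N}(0,1)$ entries rewrites each expectation as a sum over pair partitions $\pi\in\mathcal{P}_2(2k)$ of the $2k$ slots, each pair forcing the two paired slots to carry the same bipartite edge $(i,j)$.

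Step two is to isolate the dominant pair partitions. A pair partition $\pi$ contributes to leading order in $p\,n^k$ exactly when the folding of the $2k$-edge closed walk by $\pi$ produces a bipartite tree: such trees have $k+1$ vertices and $k$ doubly-traversed edges, and those pairings correspond precisely to non-crossing pair partitions on the cyclic arrangement of slots, while all other partitions close a cycle in the quotient graph and lose at least one factor of $n^{-1}$. If the resulting tree has $r$ distinct $i$-vertices and $k+1-r$ distinct $j$-vertices, the number of admissible index choices is $p^r n^{k+1-r}(1+o(1))$, which after normalisation by $p\,n^k$ contributes $y^{r-1}$ in the limit $p/n\to y$. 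A standard enumeration via ordered rooted plane trees identifies the number of non-crossing bipartite matchings producing a tree with $r$ distinct $i$-vertices with the Narayana number $\frac{1}{k}\binom{k}{r}\binom{k}{r-1}$, giving
\[
\lim_{p\to\infty}\mathrm{E}[\operatorname{tr}(\overline{Z}_p^k)] \;=\; \sum_{r=1}^{k}\frac{1}{k}\binom{k}{r}\binom{k}{r-1}\, y^{r-1},
\]
which is the $k$-th moment of $\mathrm{MP}_y$.

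Finally, a second Wick expansion applied to $\mathrm{E}[\operatorname{tr}(\overline{Z}_p^k)^2]$ yields the variance bound $\operatorname{Var}\bigl(\operatorname{tr}(\overline{Z}_p^k)\bigr)=O(p^{-2})$: pair partitions that split across the two copies of the trace reproduce $\mathrm{E}[\operatorname{tr}(\overline{Z}_p^k)]^2$ and cancel the square of the mean, while partitions that connect the two copies necessarily close an additional cycle in the quotient graph and thus cost an extra factor of $n^{-1}$. Borel--Cantelli then gives almost-sure convergence of each moment, and since $\mathrm{MP}_y$ is determined by its moments the ESD of $\overline{Z}_p$ converges weakly to $\mathrm{MP}_y$ almost surely. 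The principal technical obstacle is the bipartite combinatorics of step two: cleanly excluding crossing (non-tree) partitions and recognising the non-crossing tree count as a Narayana number requires noticeably more care than the Catalan enumeration used in the Wigner/semicircle case.
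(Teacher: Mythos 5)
Your argument is correct, but note that the paper does not prove this statement at all: Fact \ref{fact 3} is imported verbatim from Theorem 5.5 of the cited reference \cite{bose2010patterned}, so there is no in-paper proof to match. What the paper does do, inside the proof of Theorem \ref{thm3}, is re-derive the first half of your computation for the Gaussian reference matrix: it expands $\frac{1}{p}\mathrm{E}[\operatorname{Tr}(\overline{Z}_p^k)]$ by Wick's formula, kills the crossing pair partitions via Fact \ref{fact 2} and Fact \ref{fact 1}, and arrives at $\sum_{\pi\in \mathit{NC}_2(2k)} y^{u(\pi)-1}$ in \eqref{eqn:Z} --- exactly your tree count indexed by the number $u(\pi)$ of distinct row-vertices --- but then it invokes Fact \ref{fact 3} as a black box to identify that limit with $\mathrm{MP}_y$, rather than closing the loop itself. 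Your proposal supplies precisely the two missing ingredients: the identification of $\sum_{\pi\in\mathit{NC}_2(2k)}y^{u(\pi)-1}$ with the moments of $\mathrm{MP}_y$ via the Narayana numbers $\frac{1}{k}\binom{k}{r}\binom{k}{r-1}$ (your formula does agree with the usual expression $\sum_{r\ge 0}\frac{y^r}{r+1}\binom{k}{r}\binom{k-1}{r}$, since $\frac{1}{k}\binom{k}{r+1}=\frac{1}{r+1}\binom{k-1}{r}$), and the upgrade from expected moments to almost-sure weak convergence of the ESD via the variance bound and moment determinacy. The one place you are terse is the variance step: the genuinely connecting pair partitions lose \emph{two} vertices relative to the $2k+2$ of a disconnected pair of trees (one for connectivity, one because a shared edge forces a cycle), and it is this deficit of two that yields the summable bound $O(p^{-2})$; your phrase ``an extra factor of $n^{-1}$'' undersells this, though the conclusion you state is the correct classical one. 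Overall the proof is sound and is the standard route; it is strictly more self-contained than what the paper does, at the cost of the bipartite tree enumeration the paper avoids by citation.
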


\begin{proof}[Proof of Theorem \ref{thm3}]
	{\it Necessary condition:}
    Suppose $\overline{X}_{p}\xrightarrow{\ast}\mathrm{MP}_y$. Then $\frac{1}{p}\mathrm{E}[\operatorname{Tr}(\overline{X}_p)]$ converges to the first moment of $\mathrm{MP}_y$ as $p\to \infty$. So first consider,

\begin{align*}
    \frac{1}{p}\mathrm{E}[\mathrm{Tr}(\overline{X}_p)] &= \frac{1}{p}\frac{1}{n} 
    \mathrm{E}[\operatorname{Tr}((D_{p \times n} \odot X_{p \times n})\cdot (D_{p \times n} \odot X_{p\times n})^\ast)] 
    \\
    &= \frac{1}{np}\sum_{\substack{1 \leq i \leq p, \\ 1 \leq j \leq n}}\mathrm{E}[x_{ij}d_{ij}x_{ji}^\ast d_{ji}^\ast]
    \\
    &= \frac{1}{np}\sum_{\substack{1 \leq i \leq p, \\ 1 \leq j \leq n}}d_{ij}^2\mathrm{E}[x_{ij}^2]
    \\
    &= \frac{1}{np}\sum_{\substack{1 \leq i \leq p, \\ 1 \leq j \leq n}}d_{ij}.
\end{align*}
The last equalty follows as $\mathrm{E}[x_ij^2]=1$ and $d_{ij}^2=d_{ij}$. On the other hand, the first moment of $\mathrm{MP}_y$ is,
 as $a=(1-\sqrt{y})^2$ and $b=(1+\sqrt{y})^2$,
\begin{align*}
    m_1 &= \frac{1}{2\pi y}\int_{a}^b \sqrt{(b-x)(x-a)}dx
    \\
    &= \frac{1}{2\pi y}\int_{-2\sqrt{y}}^{2\sqrt{y}} \sqrt{4y-z^2}dz \quad (x=1+y+z)
    \\
    &= 1.
\end{align*}
Therefore we have the forward direction, as $\frac{1}{p}\mathrm{E}[\mathrm{Tr}(\overline{X}_p)]\to m_1$. 

\vspace{.2cm}
\noindent {\it Sufficient condition:} Suppose $\frac{1}{np}\sum_{i,j}d_{ij}\to 1$ as $p\to \infty$.
By Fact \ref{fact 3} it is enough to prove the following for the converse part.
\[
\lim_{p \to \infty} \frac{1}{p} \mathrm{E}[\operatorname{Tr}(\overline{X}_p^k)] = 
\lim_{p \to \infty} \frac{1}{p} \mathrm{E}[\operatorname{Tr}(\overline{Z}_p^k)],\text{ for all } k\geq 1.
\]
Using the trace formula for product of matrices we have,
$$\frac{1}{p} \mathrm{E}[\operatorname{Tr}(\overline{X}_p^k)]=\frac{1}{p \cdot n^k} \sum_{I'_{2k}} \mathrm{E}[y^{\epsilon_1}_{i_1 i_2}
 y^{\epsilon_2}_{i_2 i_3} \cdots y^{\epsilon_{2k}}_{i_{2k} i_1}],$$
where  $I'_{2k} = \{(i_1, \ldots, i_{2k}) : 1 \le i_{2m} \le n,\, 1 \le i_{2m-1} \le p,\; m = 1, \ldots, k\}$, $y_{ij}=x_{ij}d_{ij}$ for all $ i,j \in I'_{2k}$,  $\epsilon_{2m}=\ast$ and $\epsilon_{2m-1}=1$ for $ m = 1, \ldots, k.$ Now using the same arguments as in the proof of Theorem 1 in \cite{adhikari2019brown}, we have
\begin{align*}
    \frac{1}{p} \mathrm{E}[\operatorname{Tr}(\overline{X}_p^k)]
    &=\frac{1}{p \cdot n^k} \sum_{I'_{2k}}  \sum_{\pi \in \mathcal{P}_{2}(2k)}\prod_{(r,s)\in \pi}
    \mathrm{E}[y^{\epsilon_r}_{i_r i_{r+1}} y^{\epsilon_s}_{i_s i_{s+1}}] + o(1)
    \\
    &= \frac{1}{p \cdot n^k} \sum_{I'_{2k}}  \sum_{\pi \in \mathcal{P}_{2}(2k)}\prod_{(r,s)\in \pi}\mathrm{E}[x^{\epsilon_r}_{i_r i_{r+1}} d^{\epsilon_r}_{i_r i_{r+1}} x^{\epsilon_s}_{i_s i_{s+1}}d^{\epsilon_s}_{i_s i_{s+1}}] + o(1)
    \\
    &= \frac{1}{p \cdot n^k} \sum_{I'_{2k}}  \sum_{\pi \in \mathcal{P}_{2}(2k)}\prod_{(r,s)\in \pi}d^{\epsilon_r}_{i_r i_{r+1}}d^{\epsilon_s}_{i_s i_{s+1}}\mathrm{E}[x^{\epsilon_r}_{i_r i_{r+1}}  x^{\epsilon_s}_{i_s i_{s+1}}] + o(1).
\end{align*}
Now consider,
\[
\begin{aligned}
\mathrm{E}[
x^{\epsilon_r}_{i_r i_{r+1}}
x^{\epsilon_s}_{i_s i_{s+1}}
] &= ( \delta_{i_r i_s} \delta_{i_{r+1} i_{s+1}} + \rho \, \delta_{i_r i_{s+1}} \delta_{i_{r+1} i_s} \, \delta_{\{i_r, i_{r+1} \leq \min\{p,n\}\}} ) \delta_{\epsilon_r \epsilon_s} \\
&\quad + ( \delta_{i_r i_{s+1}} \delta_{i_{r+1} i_s} + \rho \, \delta_{i_r i_s} \delta_{i_{r+1} i_{s+1}} \, \delta_{\{i_r, i_{r+1} \leq \min\{p,n\}\}} )(1 - \delta_{\epsilon_r \epsilon_s}) \\
&= ( \delta_{\epsilon_r \epsilon_s} + \rho (1 - \delta_{\epsilon_r \epsilon_s}) \delta_{\{i_r, i_{r+1} \leq \min\{p,n\}\}} ) \delta_{i_r i_s} \delta_{i_{r+1} i_{s+1}} \\
&\quad + ( \rho \, \delta_{\epsilon_r \epsilon_s} \, \delta_{\{i_r, i_{r+1} \leq \min\{p,n\}\}} + (1 - \delta_{\epsilon_r \epsilon_s}) ) \delta_{i_r i_{s+1}} \delta_{i_{r+1} i_s} \\
&= f(r,s) + g(r,s), \quad \text{say}.
\end{aligned}
\]
Observe that $f(r,s)\leq \delta_{i_r i_s} \delta_{i_{r+1} i_{s+1}}$ and $g(r,s)\leq \delta_{i_r i_{s+1}} \delta_{i_{r+1} i_s}$. So from Fact \ref{fact 2}, we have
\begin{align*}
    \lim_{p\to \infty}\frac{1}{p} \mathrm{E}[\operatorname{Tr}(\overline{X}_p^k)] &=\lim_{p\to \infty}\frac{1}{p \cdot n^k} \sum_{I'_{2k}}  \sum_{\pi \in \mathcal{P}_{2}(2k)}\prod_{(r,s)\in \pi}d^{\epsilon_r}_{i_r i_{r+1}}d^{\epsilon_s}_{i_s i_{s+1}}g(r,s)
    \\ 
    &= \sum_{\pi \in \mathit{NC}_{2}(2k)}\lim_{p\to \infty}\frac{1}{p \cdot n^k} \sum_{I'_{2k}}  \prod_{(r,s)\in \pi}d^{\epsilon_r}_{i_r i_{r+1}}d^{\epsilon_s}_{i_s i_{s+1}}g(r,s).
\end{align*}
Now observe that if $\pi\in\mathit{NC}_{2}(2k)$ then one of r and s has to be odd and other has to be even. Therefore $\delta_{\epsilon_r \epsilon_s}=0$ and we get,
\begin{align}\label{eqn:step1}
    \lim_{p\to \infty}\frac{1}{p} \mathrm{E}[\operatorname{Tr}(\overline{X}_p^k)] 
    &=\sum_{\pi \in \mathit{NC}_{2}(2k)}\lim_{p\to \infty}\frac{1}{p \cdot n^k} \sum_{I'_{2k}}  
    \prod_{(r,s)\in \pi}d^{\epsilon_r}_{i_r i_{r+1}}d^{\epsilon_s}_{i_s i_{s+1}}\delta_{i_r i_{s+1}} \delta_{i_{r+1} i_s}\nonumber
     \\
    &= \sum_{\pi \in \mathit{NC}_{2}(2k)}\lim_{p\to \infty}\frac{1}{p \cdot n^k} \sum_{I'_{2k}}  
    \prod_{r=1}^{2k}d^{\epsilon_r}_{i_r i_{r+1}}\delta_{i_r i_{\gamma \pi (r)}}\nonumber
    \\
    &= \sum_{\pi \in \mathit{NC}_{2}(2k)}\lim_{p\to \infty}\frac{1}{p \cdot n^k} \sum_{I''_{2k}(\pi)}  \prod_{r=1}^{2k}d^{\epsilon_r}_{i_r i_{r+1}},
\end{align}
where $I''_{2k}(\pi)=\{(i_1,i_2,\dots,i_{2k})\in I'_{2k} : \delta_{i_r i_{\gamma \pi (r)}}=1 \text{ for each } r=1,2,\dots,2k \}$.
\\
Let $\pi=\{\{r_1 ,s_1\},\{r_2 , s_2\},\dots,\{r_k ,s_k\}\}\in\mathit{NC}_2 (2k)$ be fixed and WLOG $r_i$'s are odd and $s_j$'s are even for each $i,j=1,2,\dots,k.$ Therefore $\epsilon_{r_i}=1$ and $\epsilon_{s_j}=\ast$. Then consider,   
\begin{align*}
    \sum_{I''_{2k}(\pi)}  \prod_{r=1}^{2k}d^{\epsilon_r}_{i_r i_{r+1}}&=  \sum_{I''_{2k}(\pi)}  d^{\epsilon_{r_1}}_{i_{r_1} i_{r_1+1}}d^{\epsilon_{r_2}}_{i_{r_2} i_{r_2+1}}\cdots d^{\epsilon_{r_k}}_{i_{r_k} i_{r_k+1}}d^{\epsilon_{s_1}}_{i_{s_1} i_{s_1+1}}d^{\epsilon_{s_2}}_{i_{s_2} i_{s_2+1}}\cdots d^{\epsilon_{s_k}}_{i_{s_k} i_{s_k+1}}
    \\
    &= \sum_{I''_{2k}(\pi)}  d_{i_{r_1} i_{r_1+1}}d_{i_{r_2} i_{r_2+1}}\cdots d_{i_{k_1} i_{r_k+1}}d_{i_{s_1+1} i_{s_1}}d_{i_{s_2+1} i_{s_2}}\cdots d_{i_{s_k+1} i_{s_k}}
    \\
    &= \sum_{I''_{2k}(\pi)}  d_{i_{r_1} i_{s_1}}d_{i_{r_2} i_{s_2}}\cdots d_{i_{r_k} i_{s_k}}d_{i_{r_1} i_{s_1}}d_{i_{r_2} i_{s_2}}\cdots d_{i_{r_k} i_{s_k}}
    \\
    &= \sum_{I''_{2k}(\pi)}  d_{i_{r_1} i_{s_1}}d_{i_{r_2} i_{s_2}}\cdots d_{i_{r_k} i_{s_k}}.
\end{align*}
Now from Fact \ref{fact 1}, number of independent indices in $I''_{2k}(\pi)$ is $k+1$. Therefore from Lemma \ref{lem 3} and Lemma \ref{lem 4}, for any $\epsilon>0$ there exists $N_\epsilon \in \mathbb{N}$ such that,
\begin{align*}
     \sum_{I''_{2k}(\pi)}  d_{i_{r_1} i_{s_1}}d_{i_{r_2} i_{s_2}}\cdots d_{i_{r_k} i_{s_k}} &\geq p^u n^{k+1-u}(1-k\epsilon)^{k+1},\text{ for all } n,p \geq N_\epsilon,
\end{align*}
where $u$ is the number of independent indices from $\{i_{r_1},i_{r_2},\dots,i_{r_{k}}\}$. Therefore
\begin{align*}
      \frac{1}{p \cdot n^k} \sum_{I''_{2k}(\pi)}  \prod_{r=1}^{2k}d^{\epsilon_r}_{i_r i_{r+1}} &\geq \frac{1}{p \cdot n^k} p^u n^{k+1-u}(1-k\epsilon)^{k+1},\text{ for all } n,p \geq N_\epsilon
      \\
      &= \left(\frac{p}{n}\right)^{u-1}(1-k\epsilon)^{k+1}.
\end{align*}
Which is true for any $\epsilon>0$. Therefore we have,
\begin{align}\label{eqn:lower}
    \liminf \frac{1}{p \cdot n^k} \sum_{I''_{2k}(\pi)}  \prod_{r=1}^{2k}d^{\epsilon_r}_{i_r i_{r+1}} \geq y^{u-1}.
\end{align}
Also we have,
\begin{align*}
    \frac{1}{p \cdot n^k} \sum_{I''_{2k}(\pi)}  \prod_{r=1}^{2k}d^{\epsilon_r}_{i_r i_{r+1}} &\leq \frac{1}{p \cdot n^k} \sum_{I''_{2k}(\pi)} 1 = \frac{1}{p \cdot n^k} p^u n^{k+1-u} = \left(\frac{p}{n}\right)^{u-1}.
\end{align*}
Which implies
\begin{align}\label{eqn:upper}
    \limsup  \frac{1}{p \cdot n^k} \sum_{I''_{2k}(\pi)}  \prod_{r=1}^{2k}d^{\epsilon_r}_{i_r i_{r+1}} \leq y^{u-1}.
\end{align}
Therefore from \eqref{eqn:step1}, \eqref{eqn:lower} and \eqref{eqn:upper} we have, 
\begin{align}\label{eqn:X}
  \lim_{p\to \infty}\frac{1}{p} \mathrm{E}[\operatorname{Tr}(\overline{X}_p^k)]=\sum_{\pi \in \mathit{NC}_{2}(2k)}y^{u(\pi)-1}.  
\end{align}
On the other hand, suppose $Z_{p\times n}=(z_{ij})_{p\times n}$. Then we have 
\begin{align*}
     \frac{1}{p} \mathrm{E}[\operatorname{Tr}(\overline{Z}_p^k)] &=\frac{1}{p \cdot n^k} \sum_{I'_{2k}} 
     \mathrm{E}[z^{\epsilon_1}_{i_1 i_2} z^{\epsilon_2}_{i_2 i_3} \cdots z^{\epsilon_{2k}}_{i_{2k} i_1}]
     \\
     &=\frac{1}{p \cdot n^k} \sum_{I'_{2k}}  \sum_{\pi \in \mathcal{P}_{2}(2k)}\prod_{(r,s)\in \pi}
     \mathrm{E}[z^{\epsilon_r}_{i_r i_{r+1}} z^{\epsilon_s}_{i_s i_{s+1}}].
\end{align*}
Where the last equality follows from Wick's formula. Using the fact that $z_{ij}$'s are i.i.d. $N(0,1)$ we have 
$$\mathrm{E}[z^{\epsilon_r}_{i_r i_{r+1}} z^{\epsilon_s}_{i_s i_{s+1}}]
=\delta_{\epsilon_r \epsilon_s}\delta_{i_r i_s}\delta_{i_{r+1}i_{s+1}} + (1-\delta_{\epsilon_r \epsilon_s})
\delta_{i_r i_{s+1}}\delta_{i_{r+1}i_{s}}.$$
From Fact \ref{fact 2}, we have  
\begin{align}\label{eqn:Z}
    \lim_{p \to \infty} \frac{1}{p} \mathrm{E}[\operatorname{Tr}(\overline{Z}_p^k)] &=  \sum_{\pi \in \mathit{NC}_{2}(2k)}
    \lim_{p\to \infty}\frac{1}{p \cdot n^k} \sum_{I'_{2k}}  \prod_{r=1}^{2k}\delta_{i_r i_{\gamma \pi (r)}}\nonumber
    \\
    &= \sum_{\pi \in \mathit{NC}_{2}(2k)}\lim_{p\to \infty}\frac{1}{p \cdot n^k} \sum_{I''_{2k}(\pi)} 1\nonumber
    \\
    &= \sum_{\pi \in \mathit{NC}_{2}(2k)}y^{u(\pi)-1}.
\end{align}
\smallskip
Using \eqref{eqn:X} and \eqref{eqn:Z} we have the result of the first part.

\noindent\textit{Proof of the second part:}
 Forward direction holds trivially using the first part. For the converse, suppose $\overline{Z}_p^{(1)},\overline{Z}_p^{(2)},
 \dots,\overline{Z}_p^{(m)}$ are $m$ independent copies of $\overline{Z}_p$(as defined in Fact \ref{fact 3}) then 
 $\overline{Z}_p^{(1)},\overline{Z}_p^{(2)},\dots,\overline{Z}_p^{(m)}$ are asymptotically free (see \cite{capitaine2004asymptotic}). 
 Therefore it is enough to prove that for any choice of $k\geq 1$ and $\tau_1,\tau_2,\dots,\tau_k \in [m]$ we have
$$
\lim_{p \to \infty}\varphi_n(\overline{X}_p^{(\tau_1)}\overline{X}_p^{(\tau_2)}\cdots\overline{X}_p^{(\tau_k)})
=\lim_{p \to \infty}\varphi_n(\overline{Z}_p^{(\tau_1)}\overline{Z}_p^{(\tau_2)}\cdots\overline{Z}_p^{(\tau_k)}).
$$
Therefore consider
\begin{align*}
	\varphi_n(\overline{X}_p^{(\tau_1)}\overline{X}_p^{(\tau_2)}\cdots\overline{X}_p^{(\tau_k)})
    &= \frac{1}{p}\mathrm{E}[\operatorname{Tr}(\overline{X}_p^{(\tau_1)}\overline{X}_p^{(\tau_2)}\cdots\overline{X}_p^{(\tau_k)})]
	\\
	&=\frac{1}{p\cdot n^k}\sum_{I_{2k}'}\mathrm{E}[y_{i_1 i_2}^{(\tau_1') \epsilon_1}y_{i_2 i_3}^{(\tau_2') \epsilon_2}\cdots y_{i_{2k} i_1}^{(\tau_{2k}') \epsilon_{2k}}]
	\\
	&=\frac{1}{p \cdot n^k} \sum_{I'_{2k}}  \sum_{\pi \in \mathcal{P}_{2}(2k)}\prod_{(r,s)\in \pi}
    \mathrm{E}[y_{i_r i_{r+1}}^{(\tau_r') \epsilon_r}y_{i_s i_{s+1}}^{(\tau_s') \epsilon_s}] + o(1),
\end{align*} 
where $I'_{2k} = \{(i_1, \ldots, i_{2k}) : 1 \le i_{2t} \le n,\, 1 \le i_{2t-1} \le p,\; t = 1, \ldots, k\}$, 
$y_{ij}=x_{ij}d_{ij}$, $\tau_{2t-1}'=\tau_{2t}'=\tau_t$, $\epsilon_{2t-1}=1$ and $\epsilon_{2t}=\ast$ for $t=1,2,\dots,k$. 
Then using the same argument as in first part we get
\\
$\lim_{p\to \infty}\varphi_n(\overline{X}_p^{(\tau_1)}\overline{X}_p^{(\tau_2)}\cdots\overline{X}_p^{(\tau_m)})$
\begin{align*}
	&=\sum_{\pi \in \mathit{NC}_{2}(2k)}\lim_{p\to \infty}\frac{1}{p \cdot n^k} \sum_{I'_{2k}}  \prod_{(r,s)\in \pi}d^{(\tau_r')\epsilon_r}_{i_r i_{r+1}}d^{(\tau_s')\epsilon_s}_{i_s i_{s+1}}\delta_{i_r i_{s+1}} \delta_{i_{r+1} i_s}\delta_{\tau_r' \tau_s'}
	\\
	&=\sum_{\pi \in \mathit{NC}_{2}(2k)}\prod_{(r,s)\in \pi}\delta_{\tau_r' \tau_s'}\lim_{p\to \infty}\frac{1}{p \cdot n^k} \sum_{I'_{2k}}  \prod_{(r,s)\in \pi}d^{(\tau_r')\epsilon_r}_{i_r i_{r+1}}d^{(\tau_r')\epsilon_r}_{i_{s+1} i_{s}}\delta_{i_r i_{s+1}} \delta_{i_{r+1} i_s}
	\\
	&=\sum_{\pi \in \mathit{NC}_{2}(2k)}\prod_{(r,s)\in \pi}\delta_{\tau_r' \tau_s'}\lim_{p\to \infty}\frac{1}{p \cdot n^k} \sum_{I''_{2k}(\pi)}  \prod_{(r,s)\in \pi}d^{(\tau_r')\epsilon_r}_{i_r i_{s}},
\end{align*}
where  $I''_{2k}(\pi)=\{(i_1,i_2,\dots,i_{2k})\in I'_{2k} : \delta_{i_r i_{\gamma \pi (r)}}=1 \text{ for each } r=1,2,\dots,2k \}$. 

Let $\pi=\{\{r_1 ,s_1\},\{r_2 , s_2\},\dots,\{r_k ,s_k\}\}\in\mathit{NC}_2 (2k)$ be fixed and WLOG $r_i$'s are odd and $s_j$'s are even for each $i,j=1,2,\dots,k.$ Therefore $\epsilon_{r_i}=1$ and $\epsilon_{s_j}=\ast$. Then we have 
$$\sum_{I''_{2k}(\pi)}  \prod_{(r,s)\in \pi}d^{(\tau_r')\epsilon_r}_{i_r i_{s}}=\sum_{I''_{2k}(\pi)}d^{(\tau_{r_1}')}_{i_{r_1} i_{s_1}}d^{(\tau_{r_2}')}_{i_{r_2} i_{s_2}}\cdots d^{(\tau_{r_k}')}_{i_{r_k} i_{s_k}}.$$ 
In this setup, an analogous to Lemma \ref{lem 4} holds and we get
$$\lim_{p\to \infty}\frac{1}{p \cdot n^k} \sum_{I''_{2k}(\pi)}  \prod_{(r,s)\in \pi}d^{(\tau_r')\epsilon_r}_{i_r i_{s}}=y^{u(\pi)-1},$$
where $u(\pi)$ denotes the  number of independent indices from $\{i_{r_1},i_{r_2},\dots,i_{r_{k}}\}$ in $\pi$. Therefore we have 
$$\lim_{p\to \infty}\varphi_n(\overline{X}_p^{(\tau_1)}\overline{X}_p^{(\tau_2)}\cdots\overline{X}_p^{(\tau_m)})=\sum_{\pi \in \mathit{NC}_{2}(2k)}y^{u(\pi)-1}\prod_{(r,s)\in \pi}\delta_{\tau_r' \tau_s'}.$$
On the other hand we have 
\begin{align*}
	\varphi_n(\overline{Z}_p^{(\tau_1)}\overline{Z}_p^{(\tau_2)}\cdots\overline{Z}_p^{(\tau_m)})
    &=\frac{1}{p\cdot n^k}\sum_{I_{2k}'}\mathrm{E}[z_{i_1 i_2}^{(\tau_1') \epsilon_1}z_{i_2 i_3}^{(\tau_2') \epsilon_2}
    \cdots z_{i_{2k} i_1}^{(\tau_{2k}') \epsilon_{2k}}]
	\\
	&=\frac{1}{p \cdot n^k} \sum_{I'_{2k}}  \sum_{\pi \in \mathcal{P}_{2}(2k)}\prod_{(r,s)\in \pi}
    \mathrm{E}[z_{i_r i_{r+1}}^{(\tau_r') \epsilon_r}z_{i_s i_{s+1}}^{(\tau_s') \epsilon_s}]. 
\end{align*}
As the random variables are $N(0,1)$, we have
\begin{align*}
	\lim_{p \to \infty}	\varphi_n(\overline{Z}_p^{(\tau_1)}\overline{Z}_p^{(\tau_2)}\cdots\overline{Z}_p^{(\tau_m)})
    &=\sum_{\pi \in \mathit{NC}_{2}(2k)}\prod_{(r,s)\in \pi}\delta_{\tau_r' \tau_s'}\lim_{p\to \infty}\frac{1}{p \cdot n^k} \sum_{I'_{2k}}  \prod_{r=1}^{2k}\delta_{i_r i_{\gamma \pi(r)}}
	\\
	&=\sum_{\pi \in \mathit{NC}_{2}(2k)}y^{u(\pi)-1}\prod_{(r,s)\in \pi}\delta_{\tau_r' \tau_s'}.
\end{align*}
Hence the result.
\end{proof}

\begin{proof}[Proof of Lemma \ref{lem 3}]
 On contrary, suppose there exist an $\epsilon_0 >0$ and subsequences $n_k,p_k$ such that $|A_{\epsilon_0,n_k,p_k}|< p_k -\epsilon_0 p_k$. Then consider,
\begin{align*}
    \sum_{\substack{1 \leq i \leq n_k \\ 1\leq j \leq p_k}}d_{ij} &= \sum_{A_{\epsilon_o,n_k,p_k}}\sum_{j=1}^{n_k} d_{ij} + \sum_{A_{\epsilon_o,n_k,p_k}^c}\sum_{j=1}^{n_k} d_{ij}
    \\
    &< n_k |A_{\epsilon_0,n_k,p_k}| + (n_k - \epsilon_0 n_k) |A_{\epsilon_o,n_k,p_k}^c|
    \\
    &=  n_k |A_{\epsilon_0,n_k,p_k}| + (n_k - \epsilon_0 n_k)(p_k - |A_{\epsilon_0,n_k,p_k}|)
    \\
    &= n_k |A_{\epsilon_0,n_k,p_k}| + (1-\epsilon_0)n_k p_k - n_k|A_{\epsilon_0,n_k,p_k}| \allowbreak + \epsilon_0 n_k|A_{\epsilon_0,n_k,p_k}| \allowbreak  
    \\
    &< (1-\epsilon_0)n_k p_k + \epsilon_0 n_k (p_k -\epsilon_0 p_k)
    \\
    &= n_k p_k (1-\epsilon_0 + \epsilon_0 - {\epsilon_0}^2).
\end{align*}
Therefore, for $k\ge 1$, we have
$$ \frac{1}{n_k p_k}\sum_{\substack{1 \leq i \leq n_k \\ 1\leq j \leq p_k}}d_{ij} < 1 - {\epsilon_0}^2 < 1.$$
Which is a contradiction to our hypothesis, therefore our assumption is wrong. One can work in a similar manner for the second inequality. Hence the result.
\end{proof}

\begin{proof}[Proof of Lemma \ref{lem 4}]
 For $l=1,\dots,k$ define,
$$A_{\epsilon,n,p}^{(l)}=\{i \in A_{\epsilon,n,p}^{(l-1)}  : \sum_{j\in B_{\epsilon,n,p}^{(l-1)}} d_{ij}\geq n-(l + 1)\epsilon n\},$$ $$B_{\epsilon,n,p}^{(l)}=\{j \in B_{\epsilon,n,p}^{(l-1)}  : \sum_{i\in A_{\epsilon,n,p}^{(l-1)}} d_{ij}\geq p-(l + 1)\epsilon p\},$$
where $A_{\epsilon,n,p}^{(0)}=A_{\epsilon,n,p}$ and $B_{\epsilon,n,p}^{(0)}=B_{\epsilon,n,p}$ are as defined in Previous Lemma.
From Lemma \ref{lem 3}, for any $\epsilon > 0$ there exists $N_\epsilon\in\mathbb{N}$ such that $|A_{\epsilon,n,p}|\geq p -\epsilon p$ and  $|B_{\epsilon,n,p}|\geq n -\epsilon n$,$\text{ for all } n,p \geq N_{\epsilon}$. Now for any $i\in A_{\epsilon,n,p}^{(l-1)}$,
\begin{align*}
    \sum_{j\in B_{\epsilon,n,p}^{(l-1)}} d_{ij} &= \sum_{j=1}^n d_{ij} - \sum_{j\in B_{\epsilon,n,p}^{(l-1)\, c}} d_{ij}
    \\
    &\geq n - l\epsilon n - \epsilon n
    \\
    &= n-(l+1)\epsilon n.
\end{align*}
Therefore we have 
$$|A_{\epsilon,n,p}^{(l)}| \geq p - \epsilon p,\text{ for all } n,p \geq N_{\epsilon}.$$
The last inequality holds from the principle of mathematical induction. Similarly we have $|B_{\epsilon,n,p}^{(l)}|\geq n -\epsilon n$,$\text{ for all } n,p \geq N_{\epsilon}$. Now in the following sum,
$$\sum_{I_{2k}}d_{i_{r_1}i_{s_1}}d_{i_{r_2}i_{s_2}}\cdots d_{i_{r_k}i_{s_k}},$$
observe that in the product there are $k$ many terms of the form $d_{i_{r_t}i_{s_t}}$ and 
the number of independent indices is $k+1$. 
Therefore, there exists at least one independent index that appears only in 
one of the $d_{i_{r_t}i_{s_t}}$ (after reducing to independent indices). 
Without loss of generality, let us assume that term is $d_{i_{r_k}i_{s_k}}$. Then there are two cases,
\begin{enumerate}
    \item[(i)] That index is $i_{r_k}$. Then we can break the sum by,    
    \\
    $\displaystyle\sum_{I_{2k}}d_{i_{r_1}i_{s_1}}d_{i_{r_2}i_{s_2}}\cdots d_{i_{r_k}i_{s_k}}$
    \begin{align*}
   &= \sum_{i_{r_1},i_{r_2},\dots,i_{r_{k-1}},i_{s_1},i_{s_2},\dots,i_{s_{k}} \in I_{2k}}d_{i_{r_1}i_{s_1}}d_{i_{r_2}i_{s_2}}\cdots d_{i_{r_{k-1}}i_{s_{k-1}}} \sum_{i_{r_k}=1}^p d_{i_{r_k}i_{s_k}}
        \\
        &\geq (p - \epsilon p)\sum_{ I_{2k}^{(1)}}d_{i_{r_1}i_{s_1}}d_{i_{r_2}i_{s_2}}\cdots d_{i_{r_{k-1}}i_{s_{k-1}}},
    \end{align*}
    where $I_{2k}^{(1)}=\{i_{r_1},i_{r_2},\dots,i_{r_{k-1}},i_{s_1},i_{s_2},\dots,i_{s_{k}} \in I_{2k} : i_{s_k}\in B_{\epsilon,n,p}^{(0)} \}$.
    \\
    \item[(ii)] That index is $i_{s_k}$. Then we can break the sum by,    
    \\
    $\displaystyle\sum_{I_{2k}}d_{i_{r_1}i_{s_1}}d_{i_{r_2}i_{s_2}}\cdots d_{i_{r_k}i_{s_k}} $
    \begin{align*}
        &= \sum_{i_{r_1},i_{r_2},\dots,i_{r_{k}},i_{s_1},i_{s_2},\dots,i_{s_{k-1}} \in I_{2k}}d_{i_{r_1}i_{s_1}}d_{i_{r_2}i_{s_2}}\cdots d_{i_{r_{k-1}}i_{s_{k-1}}} \sum_{i_{s_k}=1}^n d_{i{r_k}i_{s_k}}
        \\
        &\geq (n - \epsilon n)\sum_{ I_{2k}^{(1)}}d_{i_{r_1}i_{s_1}}d_{i_{r_2}i_{s_2}}\cdots d_{i_{r_{k-1}}i_{s_{k-1}}},
    \end{align*}
    where $I_{2k}^{(1)}=\{i_{r_1},i_{r_2},\dots,i_{r_{k}},i_{s_1},i_{s_2},\dots,i_{s_{k-1}} \in I_{2k} : i_{r_k}\in A_{\epsilon,n,p}^{(0)} \}$.
\end{enumerate}
After any of the above cases we have $I_{2k}^{(1)}$, which contains $k$ many independent indices and $k-1$ many terms in the product.
 Then  choose one term from $d_{i_{r_t}i_{s_t}}$ as done previously and restrict the corresponding index on corresponding $A_{\epsilon,n,p}^{(w)}$ or $B_{\epsilon,n,p}^{(w)}$. So after repeating this process $k$ times we will left with one of the following,
$$\sum_{i \in A_{\epsilon,n,p}^{(t)}}1 \geq p-\epsilon p \quad \text{or} \sum_{j \in B_{\epsilon,n,p}^{(t)}}1 \geq n-\epsilon n \quad \text{for some }1\leq t \leq k-1.$$
Therefore in total we have,
$$
\sum_{I_{2k}}d_{i_{r_1}i_{s_1}}d_{i_{r_2}i_{s_2}}\cdots d_{i_{r_k}i_{s_k}} 
\geq p^u n^{k+1-u}(1-k\epsilon)^{k+1},\text{ for all } n,p \geq N_\epsilon,
$$
where $u$ is the number of independent indices from $\{i_{r_1},i_{r_2},\dots,i_{r_{k}}\}$. This completes the proof.
\end{proof}

\bibliographystyle{abbrv}
\bibliography{mybib}

\begin{thebibliography}{10}

\bibitem{adhikari2019brown}
K.~Adhikari and A.~Bose.
\newblock Brown measure and asymptotic freeness of elliptic and related
  matrices.
\newblock {\em Random Matrices: Theory and Applications}, 8(02):1950007, 2019.

\bibitem{adhikari2019linear}
K.~Adhikari, I.~Jana, and K.~Saha.
\newblock Linear eigenvalue statistics of random matrices with a variance
  profile. arxiv e-prints, page.
\newblock {\em arXiv preprint arXiv:1901.09404}, 2019.

\bibitem{aljadeff2015transition}
J.~Aljadeff, M.~Stern, and T.~Sharpee.
\newblock Transition to chaos in random networks with cell-type-specific
  connectivity.
\newblock {\em Physical review letters}, 114(8):088101, 2015.

\bibitem{allesina2015predicting}
S.~Allesina, J.~Grilli, G.~Barab{\'a}s, S.~Tang, J.~Aljadeff, and A.~Maritan.
\newblock Predicting the stability of large structured food webs.
\newblock {\em Nature communications}, 6(1):7842, 2015.

\bibitem{allesina2015stability}
S.~Allesina and S.~Tang.
\newblock The stability--complexity relationship at age 40: a random matrix
  perspective.
\newblock {\em Population Ecology}, 57(1):63--75, 2015.

\bibitem{bai2010spectral}
Z.~Bai and J.~W. Silverstein.
\newblock {\em Spectral analysis of large dimensional random matrices},
  volume~20.
\newblock Springer, 2010.

\bibitem{bai1997}
Z.~D. Bai.
\newblock Circular law.
\newblock {\em The Annals of Probability}, 25(1):494--529, 1997.

\bibitem{bigot2021freeness}
J.~Bigot and C.~Male.
\newblock Freeness over the diagonal and outliers detection in deformed random
  matrices with a variance profile.
\newblock {\em Information and Inference: A Journal of the IMA},
  10(3):863--919, 2021.

\bibitem{bordenave}
C.~Bordenave and D.~Chafa\"i.
\newblock Around the circular law.
\newblock {\em Probab. Surv.}, 9:1--89, 2012.

\bibitem{bose2010patterned}
A.~Bose, R.~S. Hazra, and K.~Saha.
\newblock Patterned random matrices and method of moments.
\newblock In {\em Proceedings of the International Congress of Mathematicians
  2010 (ICM 2010) (In 4 Volumes) Vol. I: Plenary Lectures and Ceremonies Vols.
  II--IV: Invited Lectures}, pages 2203--2231. World Scientific, 2010.

\bibitem{capitaine2004asymptotic}
M.~Capitaine and M.~Casalis.
\newblock Asymptotic freeness by generalized moments for gaussian and wishart
  matrices. application to beta random matrices.
\newblock {\em Indiana University mathematics journal}, pages 397--431, 2004.

\bibitem{cheliotis2025limit}
D.~Cheliotis and M.~Louvaris.
\newblock The limit of the operator norm for random matrices with a variance
  profile.
\newblock {\em Electronic Journal of Probability}, 30:1--33, 2025.

\bibitem{cook2018non}
N.~Cook, W.~Hachem, J.~Najim, and D.~Renfrew.
\newblock Non-hermitian random matrices with a variance profile (i):
  deterministic equivalents and limiting esds.
\newblock 2018.

\bibitem{cook2022non}
N.~Cook, W.~Hachem, J.~Najim, and D.~Renfrew.
\newblock Non-hermitian random matrices with a variance profile (ii):
  properties and examples.
\newblock {\em Journal of Theoretical Probability}, 35(4):2343--2382, 2022.

\bibitem{dykema1993certain}
K.~Dykema.
\newblock On certain free product factors via an extended matrix model.
\newblock {\em Journal of functional analysis}, 112(1):31--60, 1993.

\bibitem{girko1985}
V.~L. Girko.
\newblock Circular law.
\newblock {\em Theory of Probability and Its Applications}, 29(4):694--706,
  1985.

\bibitem{gotze2015minimal}
F.~G{\"o}tze, A.~Naumov, and A.~Tikhomirov.
\newblock On minimal singular values of random matrices with correlated
  entries.
\newblock {\em Random Matrices: Theory and Applications}, 4(02):1550006, 2015.

\bibitem{hachem2006empirical}
W.~Hachem, P.~Loubaton, and J.~Najim.
\newblock The empirical distribution of the eigenvalues of a gram matrix with a
  given variance profile.
\newblock In {\em Annales de l'Institut Henri Poincare (B) Probability and
  Statistics}, volume~42, pages 649--670. Elsevier, 2006.

\bibitem{hiai2000asymptotic}
F.~Hiai and D.~Petz.
\newblock Asymptotic freeness almost everywhere for random matrices.
\newblock {\em Acta Scientiarum Mathematicarum (Szeged)}, 66(3-4):809--834,
  2000.

\bibitem{latouche2011overlapping}
P.~Latouche, E.~Birmel{\'e}, and C.~Ambroise.
\newblock Overlapping stochastic block models with application to the french
  political blogosphere.
\newblock 2011.

\bibitem{lu2011link}
L.~L{\"u} and T.~Zhou.
\newblock Link prediction in complex networks: A survey.
\newblock {\em Physica A: statistical mechanics and its applications},
  390(6):1150--1170, 2011.

\bibitem{marvcenko1967distribution}
V.~A. Mar{\v{c}}enko and L.~A. Pastur.
\newblock Distribution of eigenvalues for some sets of random matrices.
\newblock {\em Mathematics of the USSR-Sbornik}, 1(4):457, 1967.

\bibitem{may1972will}
R.~M. May.
\newblock Will a large complex system be stable?
\newblock {\em Nature}, 238(5364):413--414, 1972.

\bibitem{nguyen2015elliptic}
H.~H. Nguyen and S.~O’Rourke.
\newblock The elliptic law.
\newblock {\em International Mathematics Research Notices},
  2015(17):7620--7689, 2015.

\bibitem{nica2006lectures}
A.~Nica and R.~Speicher.
\newblock {\em Lectures on the combinatorics of free probability}, volume~13.
\newblock Cambridge University Press, 2006.

\bibitem{picard2009deciphering}
F.~Picard, V.~Miele, J.-J. Daudin, L.~Cottret, and S.~Robin.
\newblock Deciphering the connectivity structure of biological networks using
  mixnet.
\newblock In {\em BMC bioinformatics}, volume~10, pages 1--11. Springer, 2009.

\bibitem{rajan2006eigenvalue}
K.~Rajan and L.~F. Abbott.
\newblock Eigenvalue spectra of random matrices for neural networks.
\newblock {\em Physical review letters}, 97(18):188104, 2006.

\bibitem{shlyakhtenko1997limit}
D.~Shlyakhtenko.
\newblock Limit distributions of matrices with bosonic and fermionic entries,
  in free probability theory.
\newblock {\em Fields Inst. Commun.}, 12:241--252, 1997.

\bibitem{sompolinsky1988chaos}
H.~Sompolinsky, A.~Crisanti, and H.-J. Sommers.
\newblock Chaos in random neural networks.
\newblock {\em Physical review letters}, 61(3):259, 1988.

\bibitem{tabouy2020variational}
T.~Tabouy, P.~Barbillon, and J.~Chiquet.
\newblock Variational inference for stochastic block models from sampled data.
\newblock {\em Journal of the American Statistical Association},
  115(529):455--466, 2020.

\bibitem{tao2011universality}
T.~Tao and V.~Vu.
\newblock Random matrices: Universality of local eigenvalue statistics.
\newblock {\em Acta Mathematica}, 206(1):127--204, 2011.

\bibitem{voiculescu1991limit}
D.~Voiculescu.
\newblock Limit laws for random matrices and free products.
\newblock {\em Inventiones mathematicae}, 104(1):201--220, 1991.

\bibitem{voiculescu1998strengthened}
D.~Voiculescu.
\newblock A strengthened asymptotic freeness result for random matrices with
  applications to free entropy.
\newblock {\em IMRN: International Mathematics Research Notices}, 1998(1),
  1998.

\bibitem{wachter1978strong}
K.~W. Wachter.
\newblock The strong limits of random matrix spectra for sample matrices of
  independent elements.
\newblock {\em The Annals of Probability}, pages 1--18, 1978.

\bibitem{wigner1955}
E.~P. Wigner.
\newblock Characteristic vectors of bordered matrices with infinite dimensions.
\newblock {\em Annals of Mathematics}, 62(3):548--564, 1955.

\bibitem{wigner1958}
E.~P. Wigner.
\newblock On the distribution of the roots of certain symmetric matrices.
\newblock {\em Annals of Mathematics}, 67(2):325--327, 1958.

\bibitem{wood2012universality}
P.~M. Wood.
\newblock Universality and the circular law for sparse random matrices.
\newblock 2012.

\bibitem{yin1986limiting}
Y.~Q. Yin.
\newblock Limiting spectral distribution for a class of random matrices.
\newblock {\em Journal of multivariate analysis}, 20(1):50--68, 1986.

\bibitem{zhao2017link}
Y.~Zhao, Y.-J. Wu, E.~Levina, and J.~Zhu.
\newblock Link prediction for partially observed networks.
\newblock {\em Journal of Computational and Graphical Statistics},
  26(3):725--733, 2017.

\end{thebibliography}
\end{document}